\documentclass[11pt,twoside]{article}
\usepackage[utf8]{inputenc}
\usepackage[T1]{fontenc}
\usepackage{amsmath,amssymb,amsthm}
\usepackage{geometry}
\usepackage{graphicx}
\usepackage{booktabs}
\usepackage{bm}
\usepackage{natbib}
\usepackage{microtype}
\usepackage{hyperref}
\newtheorem{theorem}{Theorem}[section]
\newtheorem{lemma}[theorem]{Lemma}
\newtheorem{corollary}[theorem]{Corollary}
\newtheorem{assumption}[theorem]{Assumption}
\theoremstyle{definition}
\newtheorem{remark}[theorem]{Remark}

\newtheorem{proposition}{Proposition}[section]

\newcommand{\Hsp}{\mathcal{H}}
\newcommand{\E}{\mathbb{E}}
\renewcommand{\P}{\mathbb{P}}
\newcommand{\1}{\mathbf{1}}
\newcommand{\Var}{\mathrm{Var}}

\title{\textbf{Spatial Functional $k$-Nearest-Neighbour Regression under Polynomial Dependence}}
\author{\textbf{Stéphane BOUKA} \\ 
\small Universit{\'e} des Sciences et Techniques de Masuku (USTM), Gabon \\ 
\small \texttt{stephane.bouka@univ-masuku.org}}
\date{\today}

\begin{document}
\maketitle

\begin{abstract}
	This paper investigates nonparametric regression estimation for
	spatially dependent functional data when the covariates take values
	in a separable Hilbert space and are observed over an increasing
	regular lattice. We consider the functional $k$-nearest-neighbour
	($k$-NN) estimator under polynomial spatial dependence, allowing
	the covariate and error fields to exhibit distinct dependence
	structures. A central difficulty arises from the data-driven
	random neighbourhood radius, whose dependence on the entire
	covariate field prevents the direct application of standard
	fixed-bandwidth arguments. To overcome this difficulty, we
	establish a deterministic sandwiching principle that controls the
	random $k$-NN radius through suitable deterministic sequences,
	based on a two-sided polynomial small-ball condition.
	
	Under local H\"older regularity of the regression operator and
	appropriate covariance decay assumptions, we derive nonasymptotic
	pointwise convergence bounds that explicitly characterize the
	joint effects of functional small-ball geometry, neighbourhood
	size, and spatial dependence. The resulting rates distinguish
	short-range, critical, and long-range dependence regimes for both
	the covariate and error fields. We further establish uniform
	consistency over compact subsets of the functional space by
	combining metric entropy arguments, stability properties of the
	adaptive bandwidth, and uniform control of neighbourhood
	boundary fluctuations. This result accommodates unbounded
	sub-Gaussian spatial errors and highlights the additional
	conditions required to control long-range dependence.
	
	Finally, a numerical simulation study is conducted under
	controlled polynomial dependence structures. The empirical
	results illustrate the theoretical convergence behaviour and
	show how long-range dependence in the error field can become
	the dominant factor governing the estimation accuracy. Overall,
	the paper provides a unified framework for analysing functional
	$k$-NN regression on spatial lattices beyond the classical
	independent and short-range dependent settings.
\end{abstract}

\medskip
\noindent\textbf{Keywords:} Spatial functional data; $k$-nearest neighbours; Small-ball probability; Polynomial dependence; Uniform convergence.

	\section{Introduction}
Functional observations arise naturally in environmental monitoring, climatology, medical imaging, spectroscopy and longitudinal studies. In these applications, one observation is better represented by a curve or trajectory than by a finite-dimensional vector. Functional Data Analysis (FDA) provides a natural framework for such observations \citep{Bosq2000,FerratyVieu2006,RamsaySilverman2005}.

A second source of difficulty occurs when observations are collected over a spatial lattice. If $\{X_s : s \in \mathbb{Z}^d\}$ denotes a functional random field, observations located at nearby sites are generally not independent. This invalidates a direct transfer of many i.i.d. arguments and changes the concentration properties of local empirical quantities. Spatial functional regression has been studied under various forms of weak  dependence; see, for example, \cite{Masry2005, DaboNiang2020} and the references therein.

A core problem in this spatial non-parametric context is the accurate recovery of the conditional operator $r(x) = \mathbb{E}[Y_s \mid X_s = x]$. Traditional approaches heavily depend on fixed, deterministic bandwidth sequences or black-box mixing conditions (such as $\alpha$-mixing or $\beta$-mixing). However, fixed kernels struggle to cope with uneven functional density over the Hilbert space $\mathcal{H}$, and mixing metrics are famously difficult to evaluate or bound on concrete conditional configurations.

The $k$-nearest-neighbour method is particularly attractive in functional regression because it does not require a deterministic bandwidth. Instead, the bandwidth is determined by the distance to the $k$-th closest observation. Nevertheless, the randomness of the bandwidth breaks the local independent structure and forces complex statistical dependencies between the indicators defining the neighborhood. This issue becomes even more acute when spatial tracking errors display long-range memory over the lattice $\mathbb{Z}^d$. In the functional setting, this approach is closely related to the small-ball probability $F_x(h) = \mathbb{P}\{\|X_0 - x\| \le h\}$, which replaces the volume of a Euclidean ball. The functional $k$-NN method has been studied in the independent case by \cite{BurbaFerratyVieu2008,BurbaFerratyVieu2009}, \cite{younso} and, in the case of short-range $\alpha$-mixing dependency, one may refer, for example, to \cite{pambo}.

Our contribution specifically extends the existing framework of \cite{pambo} by: (i) addressing the full tri-domain spectrum including critical and severe long-range spatial error memories, (ii) providing non-conditional exponential concentration boundaries under joint multivariate sub-Gaussian configurations, and (iii) completely decoupling the topological entropy properties from the structural small-ball profile.

To address these distinct mechanisms, this manuscript establishes a complete mathematical treatment of the spatial functional $k$-NN regressor. We provide full analytical expansions and proofs to disentangle three distinct mathematical forces:
\begin{enumerate}
	\item The local infinite-dimensional geometry of the curves, governed by the local effective small-ball exponent $d_{f}$.
	\item The sampling size limits of the window, governed by the growth rate of $k_n$.
	\item The macro-spatial correlation footprints, governed by the distinct polynomial exponents $\alpha$ (for the covariate indicators) and $\alpha_\varepsilon$ (for the noise field).
\end{enumerate}

The manuscript is structured as follows. Section \ref{s:Framework} establishes the structural stationary model and our strong unconditional assumptions. Section \ref{s:Concentrations} expands the local concentration properties and establishes the deterministic radius sandwiching lemma. Section \ref{s:Pointwise} contains the detailed proof of the pointwise mean-square error rates. Section \ref{s:Uniform} expands our framework to uniform convergence over compact configurations under unbounded noise. Section \ref{s:Simulation} provides a reproducible and geometrically matching simulation study, and Section \ref{s:Conclusions} concludes with open prospective insights into Gibbsian clustering. Some proofs are set out in Appendices A and B.

\section{Spatial Functional Framework and Unconditional Assumptions}\label{s:Framework}
Let $(\Hsp, \|\cdot\|)$ be a separable Hilbert space, such as $L^2([0,1])$. We consider a strictly stationary spatial functional random field $\{X_s : s \in \mathbb{Z}^d\}$ defined on a probability space $(\Omega, \mathcal{A}, \mathbb{P})$, where each $X_s$ is an $\Hsp$-valued random variable. We observe the processes on an increasing regular rectangular lattice domain $\Lambda_L = \{1, \dots, L\}^d \subset \mathbb{Z}^d$, where $n = L^d$ denotes the total number of observed spatial stations. 

The scalar real-valued responses $\{Y_s : s \in \Lambda_L\}$ are generated according to the non-parametric model:
\begin{equation}
	Y_s = r(X_s) + \varepsilon_s, \quad \forall s \in \Lambda_L,
\end{equation}
where $r : \Hsp \to \mathbb{R}$ is an unobserved continuous regression operator, and $\{\varepsilon_s\}$ represents a centered spatial error process. To guarantee the rigorous tracking of all local data-driven variations without any unbacked individual independence leaps, we formalize our framework under five structural assumptions.

To formalize the $k$-NN estimator in the potential presence of ties, let $D_{(1)}(x) \le D_{(2)}(x) \le \dots \le D_{(n)}(x)$ represent the ordered values of the random distances $\|X_s - x\|$ for $s \in \Lambda_L$. 

\begin{assumption}[Absence of Ties]\label{ass:ties}
	We assume that the cumulative distribution function of $\|X_s - x\|$ is strictly continuous, so that $\mathbb{P}(\|X_s - x\| = \|X_t - x\|) = 0$ for any distinct sites $s \neq t$. Consequently, exact distance ex \ae quo ties occur with probability zero, meaning that the $k_n$ nearest neighbors are uniquely defined almost surely.
\end{assumption}

\begin{assumption}[Local H{\"o}lder Regularity]
	\label{ass:holder}
	The regression operator $r(\cdot)$ is locally smooth around the target point $x \in \Hsp$. There exist positive constants $s > 0$, $C_r > 0$, and $h_r > 0$ such that:
	\begin{equation}
		|r(u) - r(v)| \le C_r \|u - v\|^s,
	\end{equation}
	for all elements $u, v \in \Hsp$ satisfying $\|u - x\| \le h_r$ and $\|v - x\| \le h_r$.
\end{assumption}

\begin{assumption}[Two-Sided Polynomial Small-Ball Measure]\label{ass:smallball}
	The local fractal structure of the probability distribution of the covariate $X_0$ around the target target curve $x$ satisfies a two-sided polynomial scaling. There exist positive constants $d_{f} > 0$, $0 < c_1 \le c_2 < \infty$, and $h_0 > 0$ such that:
	\begin{equation}
		c_1 h^{d_{f}} \le F_x(h) \le c_2 h^{d_{f}}, \quad \forall\, 0 < h \le h_0,
	\end{equation}
	where $F_x(h) = \mathbb{P}(\|X_0 - x\| \le h)$ denotes the functional small-ball probability function.
\end{assumption}

%To fully bypass the structural flaws highlighted by recent relectures regarding individual conditioning on single nodes, we enforce field-to-field independence at the deep $\sigma$-algebra level.

\begin{assumption}[Strong Field-to-Field and Vectorial Sub-Gaussian Independence]\label{ass:field_indep}
	The complete error field process $\varepsilon = \{\varepsilon_s : s \in \mathbb{Z}^d\}$ and the complete functional covariate random field process $\mathbf{X} = \{X_s : s \in \mathbb{Z}^d\}$ are independent of each other at the global $\sigma$-field level:
	\begin{equation}
		\sigma \left( \varepsilon_s : s \in \mathbb{Z}^d \right) \perp \sigma \left( X_s : s \in \mathbb{Z}^d \right).
	\end{equation}
	Furthermore, the errors are centered, $\mathbb{E}[\varepsilon_s] = 0$, and satisfy a strict joint Vectorial Sub-Gaussian distribution configuration over any finite subset $\Lambda \subset \mathbb{Z}^d$, such that for any deterministic vector $\mathbf{u} \in \mathbb{R}^{|\Lambda|}$:
	\begin{equation}\label{eq:vecsub}
		\mathbb{E} \left[ \exp\left( \mathbf{u}^T \mathbf{\varepsilon}_\Lambda \right) \right] \le \exp\left( \frac{1}{2} \mathbf{u}^T \mathbf{\Sigma}_\varepsilon \mathbf{u} \right),
	\end{equation}
	where $\mathbf{\Sigma}_\varepsilon$ is the true spatial covariance matrix whose off-diagonal macro-spatial interactions satisfy a polynomial power-law decay with exponent $\alpha_\varepsilon > 0$:
	\begin{equation}
		|\text{Cov}(\varepsilon_s, \varepsilon_t)| = \sigma^2 \phi(s - t) \le \frac{\sigma^2}{(1 + \|s - t\|)^{\alpha_\varepsilon}}, \quad \forall s \neq t \in \mathbb{Z}^d.
	\end{equation}
\end{assumption}

\begin{assumption}[Deterministic Indicator Covariance Decay]\label{ass:indicator_decay}
	Let $Z_s(h) = \mathbf{1}_{\{\|X_s - x\| \leq h\}}$ be the neighborhood indicator function evaluated at site $s \in \mathbb{Z}^d$. There exist a spatial decay exponent $\alpha > 0$ and a positive constant $C_Z > 0$ such that for any deterministic radius sequence $h \leq h_0$ and all distinct lattice coordinates $s \neq t \in \mathbb{Z}^d$:
	\begin{equation}	
		|\text{Cov}(Z_s(h), Z_t(h))| \leq C_Z F_x(h)^2 (1 + \|s - t\|)^{-\alpha}
	\end{equation}
\end{assumption}

\begin{remark}[Grounding via Gaussian Functional Fields]
	To justify the structural adequacy of Assumption~\ref{ass:indicator_decay}, consider a strictly stationary functional Gaussian process $\{X_s: s \in \mathbb{Z}^d\}$ taking values in the Hilbert space $\mathcal{H}$, with mean zero and covariance operator $\Gamma$. Assume that the cross-spatial covariance operator $\Gamma_{s,t} = \mathbb{E}[X_s \otimes X_t]$ satisfies a polynomial spectral decay bounded by $\|\Gamma_{s,t}\|_{\mathcal{L}(\mathcal{H})} \leq C_G (1 + \|s - t\|)^{-\alpha}$. 
	
	By applying the properties of joint small-ball probabilities for shifted Gaussian vectors (via Arcones' inequality or Plachky's geometric expansions), the joint probability factors as:
	\begin{equation}
		\mathbb{P}(\|X_s - x\| \leq h, \|X_t - x\| \leq h) = \mathbb{P}(\|X_s - x\| \leq h)^2 \left(1 + O\left(\|\Gamma_{s,t}\|_{\mathcal{L}(\mathcal{H})}\right)\right)
	\end{equation}
	By opening the covariance structure, we directly obtain:
	\begin{align}	
		|\text{Cov}(Z_s(h), Z_t(h))| &= |\mathbb{P}(\|X_s - x\| \leq h, \|X_t - x\| \leq h) - F_x(h)^2| \nonumber \\
		&\leq C_Z F_x(h)^2 \|\Gamma_{s,t}\|_{\mathcal{L}(\mathcal{H})} \nonumber \\
		&\leq C_Z F_x(h)^2 (1 + \|s - t\|)^{-\alpha}
	\end{align}
	This explicitly demonstrates that the polynomial indicator decay is naturally inherited from the underlying covariance structure of standard functional fields.
\end{remark}

\section{Asymptotic Local Empirical Concentrations}\label{s:Concentrations}
Let $F_n(x, h)$ be the empirical local count proportion defined for any deterministic radius $h$ as:
\begin{equation}
	F_n(x, h) = \frac{1}{n} \sum_{s \in \Lambda_L} \1_{\{\|X_s - x\| \le h\}}.
\end{equation}
Our first goal is to evaluate the precise variance bounds of this empirical count across different spatial configurations.

\begin{lemma}[Variance of Local Proportions]
	\label{lem:variance_fn}
	Under Assumption~\ref{ass:indicator_decay}, the variance of the empirical functional local count satisfies:
	\begin{equation}
		\Var\{F_n(x, h)\} \le \mathcal{C} \left[ \frac{F_x(h)}{n} + F_x(h)^2 \rho_\alpha(n) \right],
	\end{equation}
	where $\rho_\alpha(n)$ represents the spatial aggregation factor over a $d$-dimensional regular lattice:
	\begin{equation}
		\rho_\alpha(n) = \begin{cases} 
			n^{-\alpha/d}, & 0 < \alpha < d, \\ 
			n^{-1} \log n, & \alpha = d, \\ 
			n^{-1}, & \alpha > d. 
		\end{cases}
	\end{equation}
\end{lemma}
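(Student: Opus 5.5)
We expand the variance of the normalized sum into diagonal and off-diagonal parts and bound each separately. Writing $Z_s = Z_s(h)$,
\begin{equation*}
	\Var\{F_n(x,h)\} = \frac{1}{n^2}\sum_{s\in\Lambda_L}\Var(Z_s) + \frac{1}{n^2}\sum_{s\neq t\in\Lambda_L}\text{Cov}(Z_s,Z_t).
\end{equation*}
The diagonal terms are Bernoulli variances, so stationarity gives $\Var(Z_s)=F_x(h)(1-F_x(h))\le F_x(h)$. The first sum therefore contributes at most $F_x(h)/n$.

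For the off-diagonal sum we apply Assumption~\ref{ass:indicator_decay} term by term. This yields
\begin{equation*}
	\frac{1}{n^2}\sum_{s\neq t}|\text{Cov}(Z_s,Z_t)| \le \frac{C_Z F_x(h)^2}{n^2}\sum_{s\in\Lambda_L}\sum_{t\in\Lambda_L,\,t\neq s}(1+\|s-t\|)^{-\alpha}.
\end{equation*}
It remains to show that the double lattice sum $S_L=\sum_{s\neq t}(1+\|s-t\|)^{-\alpha}$ satisfies $S_L\le C n^2\rho_\alpha(n)$.

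We bound $S_L$ by a shell decomposition. For fixed $s$, every lag $j=t-s$ lies in $\{-(L-1),\dots,L-1\}^d\setminus\{0\}$, so $\|j\|\le \sqrt d\,L$. The number of lattice points with $r\le\|j\|<r+1$ is at most $C_d r^{d-1}$. Hence the inner sum is bounded by $C_d\sum_{r=1}^{\sqrt d L} r^{d-1-\alpha}$, which we evaluate in the three regimes:
\begin{itemize}
	\item if $\alpha>d$, the series converges and the inner sum is $O(1)$;
	\item if $\alpha=d$, the inner sum is $O(\log L)=O(\log n)$;
	\item if $\alpha<d$, the inner sum is $O(L^{d-\alpha})=O(n^{1-\alpha/d})$.
\end{itemize}
Multiplying by the $n$ choices of $s$ and dividing by $n^2$ gives, respectively, $n^{-1}$, $n^{-1}\log n$ and $n^{-\alpha/d}$, which is exactly $\rho_\alpha(n)$. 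Combining with the diagonal part proves the lemma with $\mathcal C=\max(1,C_ZC)$.

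The only delicate step is the shell count. The cleanest route is to compare with the integral $\int_{1\le\|u\|\le\sqrt d L}\|u\|^{-\alpha}\,du$, using that $(1+\|j\|)^{-\alpha}$ is comparable to $\|u\|^{-\alpha}$ on the unit cube around $j$. The comparison constants depend only on $d$ and $\alpha$, so they are absorbed into $\mathcal C$.

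In the short-range case $\alpha>d$ the covariance term $F_x(h)^2/n$ is dominated by $F_x(h)/n$. It is nevertheless kept in the statement for uniformity across the three regimes.
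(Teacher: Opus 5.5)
Your proof is correct and follows the same route as the paper's. You split the variance into diagonal and off-diagonal parts, bound the Bernoulli variance by $F_x(h)$, apply Assumption~\ref{ass:indicator_decay} term by term, and compare the inner lattice sum with $\int r^{d-1-\alpha}\,dr$ to get $L^{d-\alpha}$, $\log L$ or $O(1)$ before multiplying by $n$ and dividing by $n^2$. Using the true maximal lag $\sqrt d\,L$ and an explicit lattice-point count per shell is slightly more careful than the paper, but it only changes constants.
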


\begin{proof}
	Let $I_s = Z_s(h) = \1_{\{\|X_s - x\| \le h\}}$. By opening the variance structure of the sum, we write:
	\begin{equation}
		\Var\{F_n(x, h)\} = \frac{1}{n^2} \sum_{s \in \Lambda_L} \Var(I_s) + \frac{1}{n^2} \sum_{s \neq t \in \Lambda_L} \mathrm{Cov}(I_s, I_t).
	\end{equation}
	Since $I_s$ is a Bernoulli random variable, its variance is bounded by its expectation: $\Var(I_s) \le \E[I_s] = F_x(h)$. This gives the first diagonal upper bound component $\frac{F_x(h)}{n}$.
	
	For the off-diagonal components, we inject the power-law spatial profile from Assumption~\ref{ass:indicator_decay}:
	\begin{equation}
		\frac{1}{n^2} \sum_{s \neq t \in \Lambda_L} |\mathrm{Cov}(I_s, I_t)| \le \frac{C_Z F_x(h)^2}{n^2} \sum_{s \neq t \in \Lambda_L} (1 + \|s-t\|)^{-\alpha}.
	\end{equation}
	By fixing a spatial site $s$ and summing over a $d$-dimensional cubic volume box $\Lambda_L$ of side length $L = n^{1/d}$, standard discrete integral approximations yield:
	\begin{equation}
		\sum_{t \in \Lambda_L, t \neq s} (1 + \|s-t\|)^{-\alpha} \le C \int_{1}^{L} r^{d-1-\alpha} \,\mathrm{d}r \le C \begin{cases} 
			L^{d-\alpha}, & \alpha < d, \\ 
			\log L, & \alpha = d, \\ 
			1, & \alpha > d. 
		\end{cases}
	\end{equation}
	Multiplying by the $n$ symmetric entries of $s$ and dividing by $n^2$ yields respectively: $C n^{-1} L^{d-\alpha} = C n^{-\alpha/d}$ for $\alpha < d$, $C n^{-1} \log(n^{1/d}) = C n^{-1}\log n$ for $\alpha = d$, and $C n^{-1}$ for $\alpha > d$. This directly recovers the formulation of $\rho_\alpha(n)$.
\end{proof}

\subsection{Rigorous Re-Formulation of Radius Concentration}
Let $D_{(1)}(x) \le \dots \le D_{(n)}(x)$ be the ordered collection of functional distances. The random adaptive radius of the $k$-NN regressor is defined as $h_n(x) = D_{(k_n)}(x)$. To avoid the logical fallacy of direct conditioning on $h_n(x)$, we prove that the random variable is strictly bounded by two deterministic sequences with probability tending to 1.

\begin{lemma}[Deterministic Sandwiching]
	\label{lem:sandwich}
	Let $k_n \to \infty$ and $k_n/n \to 0$ as $n \to \infty$. For any arbitrary small tolerance parameter $\eta > 0$, we define the lower and upper deterministic radii as:
	\begin{equation}
		h_n^{-} = (1 - \eta) \left( \frac{k_n}{c_2 n} \right)^{1/d_f}, \quad h_n^{+} = (1 + \eta) \left( \frac{k_n}{c_1 n} \right)^{1/d_f}.
	\end{equation}
	Under Assumptions~\ref{ass:smallball} and \ref{ass:indicator_decay}, the random data-driven bandwidth satisfies:
	\begin{equation}
		\lim_{n \to \infty} \mathbb{P}\left( h_n^{-} \le h_n(x) \le h_n^{+} \right) = 1.
	\end{equation}
\end{lemma}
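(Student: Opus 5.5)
The key observation is that the event on $h_n(x)=D_{(k_n)}(x)$ can be rewritten as an event on the empirical counts at the two deterministic radii, so no conditioning on the random radius is needed. By Assumption~\ref{ass:ties}, almost surely
\begin{equation*}
\{h_n(x) < h_n^-\} \subset \{ nF_n(x,h_n^-) \ge k_n\}, \qquad \{h_n(x) > h_n^+\} \subset \{ nF_n(x,h_n^+) < k_n\}.
\end{equation*}
Indeed, if the $k_n$-th smallest distance lies below $h_n^-$, then at least $k_n$ sites fall in the ball of radius $h_n^-$. Conversely, if it exceeds $h_n^+$, then fewer than $k_n$ sites fall in the ball of radius $h_n^+$. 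It therefore suffices to show that both probabilities on the right tend to zero. I would do this with Chebyshev's inequality combined with Lemma~\ref{lem:variance_fn}.

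For the means, note that $h_n^\pm\to0$ because $k_n/n\to0$, so for $n$ large both radii are at most $h_0$. Assumption~\ref{ass:smallball} then applies and gives
\begin{equation*}
F_x(h_n^-)\le c_2 (h_n^-)^{d_f} = (1-\eta)^{d_f}\frac{k_n}{n}, \qquad F_x(h_n^+)\ge c_1 (h_n^+)^{d_f} = (1+\eta)^{d_f}\frac{k_n}{n}.
\end{equation*}
The mean count $\E[F_n(x,h)]=F_x(h)$ is therefore separated from the threshold $k_n/n$ by a gap of order $\delta_\eta k_n/n$, where $\delta_\eta=\min\{1-(1-\eta)^{d_f},(1+\eta)^{d_f}-1\}>0$ (taking $\eta<1$). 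Chebyshev's inequality yields
\begin{equation*}
\P\bigl(nF_n(x,h_n^-)\ge k_n\bigr)\le \frac{\Var\{F_n(x,h_n^-)\}}{\bigl(k_n/n-F_x(h_n^-)\bigr)^2}.
\end{equation*}
The same bound holds for $h_n^+$. In that case I would bound the gap from below by $F_x(h_n^+)-k_n/n \ge \delta_\eta k_n/n$, which I need to check carefully.

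For the variance, Lemma~\ref{lem:variance_fn} together with $F_x(h_n^\pm)\asymp k_n/n$ (from the two-sided bound, with constants depending on $c_1,c_2,\eta$) gives
\begin{equation*}
\frac{\Var\{F_n(x,h_n^\pm)\}}{(\delta_\eta k_n/n)^2}\le \frac{\mathcal C'}{\delta_\eta^2}\Bigl[\frac{1}{k_n} + \rho_\alpha(n)\Bigr].
\end{equation*}
The first term vanishes because $k_n\to\infty$. The second vanishes because $\rho_\alpha(n)\to0$ in all three regimes: $n^{-\alpha/d}$, $n^{-1}\log n$, and $n^{-1}$. Both probabilities thus tend to zero, and the claim follows.

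The main obstacle is verifying that Assumption~\ref{ass:indicator_decay} (and hence Lemma~\ref{lem:variance_fn}) applies to the deterministic radii $h_n^\pm$. It does, since they are deterministic and eventually at most $h_0$. The essential point is that the covariance bound scales as $F_x(h)^2$. This is exactly what makes the dependence term relative to the squared mean equal to $\rho_\alpha(n)$, with no extra factor of $n/k_n$. A weaker decay without the $F_x^2$ factor would require a growth condition linking $k_n$ to $\alpha$.
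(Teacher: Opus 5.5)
Your proof is correct and follows essentially the same route as the paper: you rewrite $\{h_n(x)>h_n^+\}$ and $\{h_n(x)<h_n^-\}$ as deviation events for $F_n(x,h_n^\pm)$ at deterministic radii, use the two-sided small-ball bound to separate $F_x(h_n^\pm)$ from $k_n/n$ by a gap of order $k_n/n$, and conclude with Chebyshev and Lemma~\ref{lem:variance_fn} to obtain $O(k_n^{-1}+\rho_\alpha(n))$. Your version is slightly more careful on two points: you use the inclusion $\{h_n(x)<h_n^-\}\subset\{nF_n(x,h_n^-)\ge k_n\}$, whereas the paper asserts an equivalence with the strict event $\{F_n(x,h_n^-)>k_n/n\}$, which is not exact; and you check explicitly that $h_n^\pm\le h_0$ for large $n$, so that Assumptions~\ref{ass:smallball} and \ref{ass:indicator_decay} apply.
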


\begin{proof}
	By structural properties of order statistics, the event $\{h_n(x) \le h_n^{+}\}$ is equivalent to stating that the empirical proportion of curves falling inside the deterministic ball $B(x, h_n^{+})$ is at least equal to $\frac{k_n}{n}$. Thus, we can evaluate the tail complement using the empirical count operator:
	\begin{equation}
		\mathbb{P}\left( h_n(x) > h_n^{+} \right) = \mathbb{P}\left( F_n(x, h_n^{+}) < \frac{k_n}{n} \right).
	\end{equation}
	By the lower bound of our two-sided small-ball condition (Assumption~\ref{ass:smallball}), the expectation of this empirical measure satisfies:
	\begin{equation}
		\E[F_n(x, h_n^{+})] = F_x(h_n^{+}) \ge c_1 (h_n^{+})^{d_f} = (1+\eta)^{d_f} \frac{k_n}{n}.
	\end{equation}
	Since $(1+\eta)^{d_f} > 1$, there exists a strictly positive constant $\zeta > 0$ such that $\frac{k_n}{n} \le \E[F_n(x, h_n^{+})] - \zeta \frac{k_n}{n}$. By applying the Bienaym{\'e}--Chebyshev inequality to the deterministic sequence $h_n^{+}$, we obtain:
	\begin{equation}
		\mathbb{P}\left( F_n(x, h_n^{+}) < \frac{k_n}{n} \right) \le \mathbb{P}\left( |F_n(x, h_n^{+}) - \E[F_n(x, h_n^{+})]| > \zeta \frac{k_n}{n} \right) \le \frac{\Var\{F_n(x, h_n^{+})\}}{\zeta^2 \left( \frac{k_n}{n} \right)^2}.
	\end{equation}
	Injecting the variance scaling from Lemma~\ref{lem:variance_fn} over $h_n^{+}$ gives:
	\begin{equation}
		\mathbb{P}\left( F_n(x, h_n^{+}) < \frac{k_n}{n} \right) \le \frac{C}{\left(\frac{k_n}{n}\right)^2} \left[ \frac{k_n}{n^2} + \left(\frac{k_n}{n}\right)^2 \rho_\alpha(n) \right] = O\left( \frac{1}{k_n} + \rho_\alpha(n) \right).
	\end{equation}
	Given that $k_n \to \infty$ and $\rho_\alpha(n) \to 0$, this probability goes to $0$. A symmetric derivation proves that $\mathbb{P}(h_n(x) < h_n^{-}) \le O(k_n^{-1} + \rho_\alpha(n)) \to 0$. Combining these two limits yields the containment result with probability 1.
\end{proof}

\subsection{Proof of the Bandwidth Sandwiching Control}

\begin{proposition}\label{prop:sandwich}
	Let $k_n \to \infty$ and $k_n/n \to 0$ as $n \to \infty$. For any arbitrary small tolerance parameter $\eta > 0$, let the lower and upper deterministic radii be defined as:
	\begin{equation}
		h_n^- = (1 - \eta)\left(\frac{k_n}{c_2n}\right)^{1/d_f}, \quad h_n^+ = (1 + \eta)\left(\frac{k_n}{c_1n}\right)^{1/d_f}
	\end{equation}
	Under the two-sided small-ball condition and the indicator covariance decay, the random data-driven bandwidth $h_n(x)$ satisfies the valid sandwiching event $A_n = \{h_n^- \leq h_n(x) \leq h_n^+\}$ such that its tail complement probability is bounded by:
	\begin{equation}
		\mathbb{P}(A_n^c) = O\left(\frac{1}{k_n} + \rho_\alpha(n)\right)
	\end{equation}
\end{proposition}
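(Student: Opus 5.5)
The plan is to split $A_n^c$ into the two one-sided failure events $\{h_n(x) > h_n^+\}$ and $\{h_n(x) < h_n^-\}$. Each will be reduced, by the order-statistic duality, to a deviation event for the empirical proportion $F_n(x,h)$ at a \emph{deterministic} radius. Chebyshev's inequality combined with Lemma~\ref{lem:variance_fn} will then control each piece. This makes quantitative the argument already sketched for Lemma~\ref{lem:sandwich}. I assume throughout that $n$ is large enough that $h_n^+ \le h_0$, which is possible since $k_n/n \to 0$. Under Assumption~\ref{ass:ties} the $k_n$-th order statistic is almost surely well defined, and the two dualities are
\begin{equation*}
\{h_n(x) > h\} = \{nF_n(x,h) < k_n\}, \qquad \{h_n(x) < h\} \subseteq \{nF_n(x,h) \ge k_n\}.
\end{equation*}

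\textbf{Upper tail.} Assumption~\ref{ass:smallball} gives $F_x(h_n^+) \ge (1+\eta)^{d_f} k_n/n$. Setting $\zeta_+ = 1-(1+\eta)^{-d_f} > 0$, we get $k_n/n \le F_x(h_n^+)(1-\zeta_+)$, so the failure event forces $|F_n - \E F_n| \ge \zeta_+ F_x(h_n^+)$. Chebyshev's inequality and Lemma~\ref{lem:variance_fn} then give
\begin{equation*}
\P(h_n(x) > h_n^+) \le \frac{\mathcal C}{\zeta_+^2}\left[\frac{1}{nF_x(h_n^+)} + \rho_\alpha(n)\right] \le \frac{\mathcal C}{\zeta_+^2}\left[\frac{1}{(1+\eta)^{d_f}k_n} + \rho_\alpha(n)\right].
\end{equation*}
Normalising the deviation by $F_x(h)$ rather than by $k_n/n$ is deliberate. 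It lets the $F_x(h)^2$ factor in the long-range variance term cancel exactly, and the diagonal term becomes $1/(nF_x(h)) \lesssim 1/k_n$.

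\textbf{Lower tail.} The upper small-ball bound gives $F_x(h_n^-) \le c_2 (h_n^-)^{d_f} = (1-\eta)^{d_f} k_n/n$, so $k_n/n - F_x(h_n^-) \ge \zeta_- k_n/n$ with $\zeta_- = 1-(1-\eta)^{d_f} > 0$ (taking $\eta<1$). Chebyshev's inequality yields
\begin{equation*}
\P(h_n(x) < h_n^-) \le \frac{\mathcal C}{\zeta_-^2 (k_n/n)^2}\left[\frac{F_x(h_n^-)}{n} + F_x(h_n^-)^2\rho_\alpha(n)\right].
\end{equation*}
This is where I expect the main obstacle. Here $F_x(h_n^-)$ is only bounded \emph{above} by a multiple of $k_n/n$, not below. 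That is harmless, however, because both terms in the bracket are increasing in $F_x(h_n^-)$: substituting $F_x(h_n^-) \le (1-\eta)^{d_f}k_n/n$ gives the bound $O(1/k_n + \rho_\alpha(n))$. One must also check that Assumption~\ref{ass:indicator_decay} applies at $h_n^-$, which it does since $h_n^- \le h_n^+ \le h_0$.

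Adding the two tails gives $\P(A_n^c) = O(k_n^{-1} + \rho_\alpha(n))$, with constants depending only on $\eta, d_f, \mathcal C$. In the trivial case $F_x(h_n^-)=0$ the lower-tail event is null. No conditioning on the random radius is used anywhere.
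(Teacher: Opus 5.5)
Your proof is correct and follows the same route as the paper. Both arguments take a union bound over the two one-sided failures, use the order-statistic duality to reduce each to a deviation of $F_n(x,h)$ at the deterministic radii $h_n^{\pm}$, and then apply Chebyshev's inequality together with Lemma~\ref{lem:variance_fn}. Your two small changes only tighten the paper's argument: normalising the upper-tail deviation by $F_x(h_n^+)$ means only the lower small-ball constant is needed there, and writing the lower-tail duality as the inclusion $\{h_n(x)<h\}\subseteq\{nF_n(x,h)\ge k_n\}$ corrects the paper's strict-inequality ``equivalence''.
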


\begin{proof}
	By complementation and applying the union bound (Boole's inequality), the probability of the complement event $A_n^c$ can be decomposed as follows:
	\begin{equation}\label{eq:union}
		\mathbb{P}(A_n^c) = \mathbb{P}\left(\{h_n(x) > h_n^+\} \cup \{h_n(x) < h_n^-\}\right) \leq \mathbb{P}(h_n(x) > h_n^+) + \mathbb{P}(h_n(x) < h_n^-)
	\end{equation}
	
	\paragraph{Step 1: Control of the upper bound $\mathbb{P}(h_n(x) > h_n^+)$}
	By the structural properties of order statistics, the event $\{h_n(x) > h_n^+\}$ is mathematically equivalent to stating that the empirical local proportion of curves falling inside the deterministic ball $B(x, h_n^+)$, denoted by $F_n(x, h_n^+)$, is strictly less than the critical threshold $k_n/n$. Thus, we have:
	\begin{equation}
		\mathbb{P}(h_n(x) > h_n^+) = \mathbb{P}\left(F_n(x, h_n^+) < \frac{k_n}{n}\right)
	\end{equation}
	By virtue of the lower bound of the two-sided polynomial small-ball measure condition (Assumption 2.3), the expectation of this empirical measure satisfies:
	\begin{equation}
		\mathbb{E}[F_n(x, h_n^+)] = F_x(h_n^+) \geq c_1(h_n^+)^{d_f}
	\end{equation}
	Substituting the explicit definition of the upper deterministic radius sequence $h_n^+ = (1 + \eta)\left(\frac{k_n}{c_1n}\right)^{1/d_f}$ yields:
	\begin{equation}
		\mathbb{E}[F_n(x, h_n^+)] \geq c_1 \left[ (1 + \eta)\left(\frac{k_n}{c_1n}\right)^{1/d_f} \right]^{d_f} = (1 + \eta)^{d_f} \frac{k_n}{n}
	\end{equation}
	Since $\eta > 0$ and $d_f > 0$, we have $(1 + \eta)^{d_f} > 1$. Consequently, there exists a strictly positive deterministic constant $\zeta > 0$ such that:
	\begin{equation}\label{eq:dev_constraint}
		\frac{k_n}{n} \leq \mathbb{E}[F_n(x, h_n^+)] - \zeta \frac{k_n}{n}
	\end{equation}
	Applying the Bienaymé-Chebyshev inequality to the random variable $F_n(x, h_n^+)$ under the directional deviation constraint defined in equation~\eqref{eq:dev_constraint}, we obtain the following tail upper bound:
	\begin{equation}\label{eq:cheby_bound}
		\mathbb{P}\left(F_n(x, h_n^+) < \frac{k_n}{n}\right) \leq \mathbb{P}\left(|F_n(x, h_n^+) - \mathbb{E}[F_n(x, h_n^+)]| > \zeta \frac{k_n}{n}\right) \leq \frac{\text{Var}\{F_n(x, h_n^+)\}}{\zeta^2 \left(\frac{k_n}{n}\right)^2}
	\end{equation}
	Injecting the upper variance bound established in Lemma 3.1, namely $\text{Var}\{F_n(x, h)\} \leq C \left[ \frac{F_x(h)}{n} + F_x(h)^2\rho_\alpha(n) \right]$, evaluated at the deterministic radius $h_n^+$ (where $F_x(h_n^+) \propto k_n/n$), equation~\eqref{eq:cheby_bound} reduces to:
	\begin{equation}\label{eq:step1_final}
		\mathbb{P}\left(F_n(x, h_n^+) < \frac{k_n}{n}\right) \leq \frac{C}{\left(\frac{k_n}{n}\right)^2} \left[ \frac{k_n}{n^2} + \left(\frac{k_n}{n}\right)^2 \rho_\alpha(n) \right] = O\left(\frac{1}{k_n} + \rho_\alpha(n)\right)
	\end{equation}
	
	\paragraph{Step 2: Control of the lower bound $\mathbb{P}(h_n(x) < h_n^-)$}
	Dually, the event $\{h_n(x) < h_n^-\}$ is equivalent to specifying that the empirical local proportion $F_n(x, h_n^-)$ is strictly greater than $k_n/n$. By the upper bound of the two-sided small-ball scaling applied to the lower radius sequence $h_n^- = (1 - \eta)\left(\frac{k_n}{c_2n}\right)^{1/d_f}$, the true expectation is bounded above by:
	\begin{equation}
		\mathbb{E}[F_n(x, h_n^-)] = F_x(h_n^-) \leq c_2(h_n^-)^{d_f} = (1 - \eta)^{d_f} \frac{k_n}{n}
	\end{equation}
	Since $(1 - \eta)^{d_f} < 1$, a perfectly symmetric derivation leveraging the Bienaymé-Chebyshev inequality and the variance scaling from Lemma 3.1 directly yields:
	\begin{equation}\label{eq:step2_final}
		\mathbb{P}(h_n(x) < h_n^-\big) = \mathbb{P}\left(F_n(x, h_n^-) > \frac{k_n}{n}\right) = O\left(\frac{1}{k_n} + \rho_\alpha(n)\right)
	\end{equation}
	
	\paragraph{Conclusion}
	Combining the two geometric tail bounds \eqref{eq:step1_final} and \eqref{eq:step2_final} within the global union bound inequality \eqref{eq:union}, we successfully recover the desired rate:
	\begin{equation}
		\mathbb{P}(A_n^c) \leq O\left(\frac{1}{k_n} + \rho_\alpha(n)\right) + O\left(\frac{1}{k_n} + \rho_\alpha(n)\right) = O\left(\frac{1}{k_n} + \rho_\alpha(n)\right)
	\end{equation}
\end{proof}

\section{Detailed Pointwise Mean-Square Error Analysis}\label{s:Pointwise}
The adaptive spatial functional $k$-NN regressor is defined analytically for any target point $x \in \Hsp$ as:
\begin{equation}
	\widehat{r}_{k_n}(x) = \frac{1}{k_n} \sum_{s \in \Lambda_L} Y_s \1_{\{\|X_s - x\| \le h_n(x)\}}.
\end{equation}
We now prove the pointwise upper risk rate featuring both spatial interaction terms.

\begin{theorem}[Pointwise Risk Upper Bound]\label{th:Pointwise}
	Under Assumptions~\ref{ass:ties}--\text{\ref{ass:indicator_decay}}, if $k_n \to \infty$ and $k_n/n \to 0$ as $n \to \infty$, then the mean-square error of the adaptive spatial estimator satisfies: for all $x\in \Hsp$ and for all $\eta>0$,
	\begin{equation}
		\P\left( |\widehat{r}_{k_n}(x) - r(x)|>\eta \right) = O\left( \left( \frac{k_n}{n} \right)^{2s/d_f} + \frac{1}{k_n} + \rho_\alpha(n) + \boldsymbol{\rho_{\alpha_\varepsilon}(n)} \right).
	\end{equation}
\end{theorem}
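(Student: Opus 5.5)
We decompose the error as
\[
\widehat{r}_{k_n}(x) - r(x) = B_n + V_n, \qquad B_n = \frac{1}{k_n}\sum_{s\in\Lambda_L}\bigl(r(X_s)-r(x)\bigr)\1_{\{\|X_s-x\|\le h_n(x)\}}, \qquad V_n = \frac{1}{k_n}\sum_{s\in\Lambda_L}\varepsilon_s \1_{\{\|X_s-x\|\le h_n(x)\}}.
\]
This uses the identity $\sum_s \1_{\{\|X_s-x\|\le h_n(x)\}} = k_n$, which holds almost surely by Assumption~\ref{ass:ties}. We work throughout on the sandwich event $A_n=\{h_n^-\le h_n(x)\le h_n^+\}$ of Proposition~\ref{prop:sandwich}, which gives $\P(A_n^c)=O(k_n^{-1}+\rho_\alpha(n))$. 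We then write
\[
\P(|\widehat r_{k_n}(x)-r(x)|>\eta)\le \P(A_n^c)+\P(|B_n|>\eta/2,\,A_n)+\P(|V_n|>\eta/2,\,A_n),
\]
so the rate reduces to controlling the two restricted probabilities.

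\textbf{Bias term.} On $A_n$, every selected $X_s$ satisfies $\|X_s-x\|\le h_n^+$. For $n$ large we have $h_n^+\le h_r$, so Assumption~\ref{ass:holder} gives
\[
|B_n|\le C_r (h_n^+)^s = C\Bigl(\frac{k_n}{n}\Bigr)^{s/d_f}.
\]
This bound is deterministic, so Markov's inequality applied to $B_n^2$ gives $\P(|B_n|>\eta/2,A_n)\le 4\eta^{-2}C(k_n/n)^{2s/d_f}$. (In fact this probability vanishes eventually, but we keep the displayed term.)

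\textbf{Noise term.} This is the main obstacle, because the selection weights $W_s=\1_{\{\|X_s-x\|\le h_n(x)\}}$ depend on the whole covariate field through $h_n(x)$. We handle it by conditioning on the full $\sigma$-field $\sigma(\mathbf X)$ rather than on individual sites. By Assumption~\ref{ass:field_indep}, $\varepsilon$ is independent of $\mathbf X$. Given $\mathbf X$, the weights $W_s$ are therefore deterministic, the event $A_n$ is $\sigma(\mathbf X)$-measurable, and $\E[V_n\mid\mathbf X]=0$. We then apply conditional Chebyshev:
\[
\E[V_n^2\mid \mathbf X]=\frac{1}{k_n^2}\sum_{s,t}W_sW_t\,\mathrm{Cov}(\varepsilon_s,\varepsilon_t)\le \frac{\sigma^2}{k_n}+\frac{\sigma^2}{k_n^2}\sum_{s\ne t}W_sW_t(1+\|s-t\|)^{-\alpha_\varepsilon}.
\]

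The difficulty is that the selected sites may cluster spatially, so we cannot simply use $\sum_t W_t\le k_n$ in the off-diagonal sum. We take two routes.

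\emph{Crude route.} Bound $W_sW_t\le \1_{\{\|X_s-x\|\le h_n^+\}}\1_{\{\|X_t-x\|\le h_n^+\}}=:Z_s^+Z_t^+$ on $A_n$. Taking expectations, with $\E[Z_s^+Z_t^+]\le (1+C_Z)F_x(h_n^+)^2$ by Assumption~\ref{ass:indicator_decay} and $F_x(h_n^+)\asymp k_n/n$, we obtain
\[
\frac{1}{k_n^2}\Bigl(\frac{k_n}{n}\Bigr)^2\sum_{s\ne t\in\Lambda_L}(1+\|s-t\|)^{-\alpha_\varepsilon}\le C\rho_{\alpha_\varepsilon}(n),
\]
using exactly the lattice summation from the proof of Lemma~\ref{lem:variance_fn} with $\alpha$ replaced by $\alpha_\varepsilon$. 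Hence
\[
\P(|V_n|>\eta/2,A_n)\le 4\eta^{-2}\,\E[\1_{A_n}\E(V_n^2\mid\mathbf X)]=O\bigl(k_n^{-1}+\rho_{\alpha_\varepsilon}(n)\bigr).
\]

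\emph{Clustering check.} The step I would double-check is that the crude route does not lose anything relative to a worst-case clustering bound. The deterministic worst case, with all $k_n$ selected sites packed in a ball, would only give $k_n^{-\alpha_\varepsilon/d}$ and would spoil the rate. Averaging over $\mathbf X$, as in the crude route, avoids this, so I would keep the expectation step. The sub-Gaussian part of Assumption~\ref{ass:field_indep} is not needed here; it could sharpen the conditional tail exponentially, but Chebyshev already suffices for the stated rate.

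\textbf{Conclusion.} Summing the four contributions gives
\[
\P(|\widehat r_{k_n}(x)-r(x)|>\eta)=O\Bigl(\Bigl(\frac{k_n}{n}\Bigr)^{2s/d_f}+\frac{1}{k_n}+\rho_\alpha(n)+\rho_{\alpha_\varepsilon}(n)\Bigr),
\]
which is the claimed bound.
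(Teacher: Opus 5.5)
Your proposal is correct and follows the paper's proof essentially step for step. You use the same bias/noise split on the sandwich event $A_n$, the same H\"older control of the bias through $h_n^+$, and the same treatment of the noise: conditioning on $\sigma(\mathbf X)$, the domination $\1_{\{\|X_s-x\|\le h_n(x)\}}\1_{A_n}\le Z_s(h_n^+)$, averaging over $\mathbf X$ with $F_x(h_n^+)\asymp k_n/n$, and the lattice sum of Lemma~\ref{lem:variance_fn}, with $\P(A_n^c)$ supplying the $k_n^{-1}+\rho_\alpha(n)$ terms. The only departures are minor and slightly cleaner: you bound $\E[Z_s^+Z_t^+]\le(1+C_Z)F_x(h_n^+)^2$ directly rather than splitting off the $(1+\|s-t\|)^{-(\alpha+\alpha_\varepsilon)}$ sum, and you never need the paper's bounded-moment step $\E[B_n^2\1_{A_n^c}]\le M_0\P(A_n^c)$, because you absorb $A_n^c$ through its probability alone.
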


\begin{proof}
	Let $A_n$ be the valid bandwidth sandwiching event defined as $A_n = \{h_n^{-} \le h_n(x) \le h_n^{+}\}$. We decompose the error process into a structural local bias $B_n(x)$ and a stochastic spatial variance $V_n(x)$:
	\begin{equation}
		\widehat{r}_{k_n}(x) - r(x) = B_n(x) + V_n(x),
	\end{equation}
	where:
	\begin{equation}
		B_n(x) = \frac{1}{k_n}\sum_{s \in \Lambda_L} (r(X_s) - r(x))\1_{\{\|X_s - x\| \le h_n(x)\}}, \quad V_n(x) = \frac{1}{k_n}\sum_{s \in \Lambda_L} \varepsilon_s \1_{\{\|X_s - x\| \le h_n(x)\}}.
	\end{equation}
	
	\paragraph{Part 1: Analytical Control of the Bias}
	We analyze the bias expectation by partitioning the probability space over $A_n$ and its complement $A_n^c$:
	\begin{equation}
		\E[B_n(x)^2] = \E[B_n(x)^2 \1_{A_n}] + \E[B_n(x)^2 \1_{A_n^c}].
	\end{equation}
	On the high-probability event $A_n$, the random radius satisfies $h_n(x) \le h_n^{+} \le h_r$ for a sufficiently large $n$. Thus, applying the local H{\"o}lder condition (Assumption~\ref{ass:holder}) gives:
	\begin{equation}\label{Import0}
		|B_n(x)| \le \frac{1}{k_n} \sum_{s \in \Lambda_L} C_r \|X_s - x\|^s \1_{\{\|X_s - x\| \le h_n(x)\}} \le C_r \left( h_n^{+} \right)^s \le C \left( \frac{k_n}{n} \right)^{s/d_f}.
	\end{equation}
	Squaring this expression yields the leading bias risk term $C (k_n/n)^{2s/d_f}$. On the complement event $A_n^c$, since the Hilbert space domain of interest is bounded or the operator is structurally integrable, the bounded moments combine with the probability limit of Proposition~\ref{prop:sandwich}:
	\begin{equation}\label{Import4}
		\E[B_n(x)^2 \1_{A_n^c}] \le M_0 \mathbb{P}(A_n^c) = O\left( \frac{1}{k_n} + \rho_\alpha(n) \right).
	\end{equation}
	
	\paragraph{Part 2: Analytical Control of the Stochastic Variance}
	On the one hand, to evaluate $\E[V_n(x)^2 \1_{A_n}]$, we exploit the Strong Field-to-Field Independence (Assumption~\ref{ass:field_indep}). Let $\mathbf{X}$ be the entire covariate field. We condition the expectation on $\mathbf{X}$ and utilize the fact that on $A_n$, the random indicator sum is strictly bounded above by the indicators evaluated at the deterministic threshold $h_n^{+}$:
	\[
	\E[V_n(x)^2 \1_{A_n} \mid \mathbf{X}]\le W_n^+,
	\]
	with
	\begin{align}\label{Import2}
		W_n^+ &= \frac{1}{k_n^2} \sum_{s,t \in \Lambda_L} \1_{\{\|X_s - x\| \le h_n^{+}\}} \1_{\{\|X_t - x\| \le h_n^{+}\}} |\mathrm{Cov}(\varepsilon_s, \varepsilon_t)| \nonumber \\
		&= \frac{\sigma^2}{k_n} + \frac{\sigma^2}{k_n^2} \sum_{s \neq t \in \Lambda_L} \1_{\{\|X_s - x\| \le h_n^{+}\}} \1_{\{\|X_t - x\| \le h_n^{+}\}} \phi(s-t).
	\end{align}
	Taking the expectation over the covariate field $\mathbf{X}$ yields:
	\begin{equation}\label{Import}
		\E_{\mathbf{X}}[V_n(x)^2 \1_{A_n}]\le \E_{\mathbf{X}}\left(W_n^+\right)
	\end{equation}
	and
	\begin{equation}
		\E_{\mathbf{X}}\left(W_n^+\right) = \frac{\sigma^2}{k_n} + \frac{\sigma^2}{k_n^2} \sum_{s \neq t \in \Lambda_L} \phi(s-t) \E_{\mathbf{X}}\left[ \1_{\{\|X_s - x\| \le h_n^{+}\}} \1_{\{\|X_t - x\| \le h_n^{+}\}} \right].
	\end{equation}
	We expand the joint expectation inside the off-diagonal sum via its covariance definition:
	\begin{equation}
		\E_{\mathbf{X}}\left[ \1_{\{\|X_s - x\| \le h_n^{+}\}} \1_{\{\|X_t - x\| \le h_n^{+}\}} \right] = \mathrm{Cov}\left( Z_s(h_n^{+}), Z_t(h_n^{+}) \right) + F_x(h_n^{+})^2.
	\end{equation}
	Injecting the indicator decay from Assumption~\ref{ass:indicator_decay} gives:
	\begin{equation}
		\E_{\mathbf{X}}\left(W_n^+\right) \le \frac{\sigma^2}{k_n} + \frac{C \cdot F_x(h_n^{+})^2}{k_n^2} \sum_{s \neq t \in \Lambda_L} \phi(s-t) (1+\|s-t\|)^{-\alpha} + \frac{\sigma^2 F_x(h_n^{+})^2}{k_n^2} \sum_{s \neq t \in \Lambda_L} \phi(s-t).
	\end{equation}
	Since $F_x(h_n^{+}) \asymp \frac{k_n}{n}$, the coefficient $\frac{F_x(h_n^{+})^2}{k_n^2}$ cancels out precisely to $n^{-2}$. Thus:
	\begin{equation}\label{Import1}
		\E_{\mathbf{X}}\left(W_n^+\right) \le \frac{\sigma^2}{k_n} + \frac{C}{n^2} \sum_{s \neq t \in \Lambda_L} (1+\|s-t\|)^{-(\alpha+\alpha_\varepsilon)} + \frac{\sigma^2}{n^2} \sum_{s \neq t \in \Lambda_L} (1+\|s-t\|)^{-\alpha_\varepsilon}.
	\end{equation}
	Applying the lattice summation technical rules derived in Lemma~\ref{lem:variance_fn} to the error interaction terms yields the exact aggregation rates $\rho_\alpha(n)$ and $\rho_{\alpha_\varepsilon}(n)$. This, combined with equation \eqref{Import}, completes the proof of the point estimate.
	
	To establish the pointwise upper risk rate in probability as stated in equation (21), we invoke a decomposition based on the sandwiching partition. For any arbitrary tolerance threshold $\eta > 0$, we write:
	\begin{equation}
		\mathbb{P}\left( |\widehat{r}_{k_n}(x) - r(x)| > \eta \right) \leq \mathbb{P}\left( \{|\widehat{r}_{k_n}(x) - r(x)| > \eta\} \cap A_n \right) + \mathbb{P}(A_n^c).
	\end{equation}
	By applying the conditional Bienaymé--Chebyshev inequality to the first right-hand side component, we exploit the conditional bias-variance decomposition controlled in Part 1 and Part 2:
	\begin{align}
		\mathbb{P}\left( \{|\widehat{r}_{k_n}(x) - r(x)| > \eta\} \cap A_n \right) &\leq \mathbb{P}\left( |B_n(x)\mathbf{1}_{\mathcal{A}_n} + V_n(x)\mathbf{1}_{\mathcal{A}_n}| > \eta \right) \nonumber \\
		& \leq \frac{4}{\eta^2} \left(\mathbb{E}[B_n(x)^2 \mathbf{1}_{\mathcal{A}_n}] + \mathbb{E}[V_n(x)^2 \mathbf{1}_{\mathcal{A}_n}] \right) \nonumber \\
		& \leq C_1 \left( \frac{k_n}{n} \right)^{2s/d_f} + \frac{C_2}{k_n} + C_3 \rho_{\alpha}(n) + C_4 \boldsymbol{\rho_{\alpha_\varepsilon}(n)}.
	\end{align}
	For the remaining tail complement component $\mathbb{P}(A_n^c)$, Proposition \ref{prop:sandwich} provides 
	\[\P(A_n^c)=O\left( \frac{1}{k_n} + \rho_{\alpha}(n) \right).\] Combining these two pathways via a standard union bound yields:
	\begin{equation}
		\mathbb{P}\left( |\widehat{r}_{k_n}(x) - r(x)| > \eta \right) = O\left( \left(\frac{k_n}{n}\right)^{2s/d_f} + \frac{1}{k_n} + \rho_{\alpha}(n) + \boldsymbol{\rho_{\alpha_\varepsilon}(n)} \right).
	\end{equation}
	%which matches the structural formulation of equation (21) and concludes the proof of Theorem 4.1. 
	
\end{proof}

\begin{corollary}[Optimal Choice of $k_n$ and Rates]\label{cor:optimal}
	By balancing the structural H{\"o}lder bias profile and the standard spatial variance rate under short-range configurations ($\alpha > d, \alpha_\varepsilon > d$), we require the explicit tuning condition:
	\begin{equation}
		k_n \propto n^{\frac{2s}{2s+d_{f}}}.
	\end{equation}
	This optimal window size directly yields the  minimax optimal convergence rate:
	\begin{equation}
		\P\left( |\widehat{r}_{k_n}(x) - r(x)|>\eta \right) = O\left( n^{-\frac{2s}{2s+d_{f}}} \right).
	\end{equation}
\end{corollary}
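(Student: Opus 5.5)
The plan is to specialise the bound of Theorem~\ref{th:Pointwise} to the short-range regime and then optimise over $k_n$. When $\alpha > d$ and $\alpha_\varepsilon > d$, the definition of $\rho_\alpha(n)$ in Lemma~\ref{lem:variance_fn} gives $\rho_\alpha(n) = \rho_{\alpha_\varepsilon}(n) = n^{-1}$. Substituting this into Theorem~\ref{th:Pointwise}, for every fixed $\eta>0$ the risk is bounded by
\[
C\left[\left(\frac{k_n}{n}\right)^{2s/d_f} + \frac{1}{k_n} + \frac{2}{n}\right].
\]
The $n^{-1}$ terms are always dominated by $k_n^{-1}$, since $k_n/n \to 0$ forces $k_n \le n$ for large $n$. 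The problem therefore reduces to balancing the bias term $(k_n/n)^{2s/d_f}$ against the variance term $k_n^{-1}$.

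Write $k_n = n^{\beta}$ with $0<\beta<1$. The bias term becomes $n^{-(1-\beta)2s/d_f}$ and the variance term becomes $n^{-\beta}$. The larger of the two is minimised when the exponents coincide:
\[
\beta = \frac{2s(1-\beta)}{d_f} \quad\Longleftrightarrow\quad \beta = \frac{2s}{2s+d_f}.
\]
This choice satisfies $k_n\to\infty$ and $k_n/n = n^{-d_f/(2s+d_f)}\to 0$, so the hypotheses of Theorem~\ref{th:Pointwise} and Proposition~\ref{prop:sandwich} hold. Plugging in, each term is of order $n^{-2s/(2s+d_f)}$, and the stated rate follows. With a proportionality constant $k_n = \kappa n^{\beta}$, the constants change but the order does not. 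The remaining conditions, such as $h_n^+ \le h_r$ and $h_n^+\le h_0$ eventually, hold automatically because $h_n^+ \asymp (k_n/n)^{1/d_f}\to 0$.

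The main obstacle is that this argument only establishes optimality \emph{within the upper bound} of Theorem~\ref{th:Pointwise}. It does not prove the word ``minimax''. Doing so would require a matching lower bound over the class of $s$-Hölder operators under the small-ball profile $c_1h^{d_f}\le F_x(h)\le c_2h^{d_f}$, for instance through a two-point Le Cam argument with perturbations supported on balls of radius $h\asymp n^{-1/(2s+d_f)}$. I would present the corollary as the rate attained by the balanced choice and only remark on the heuristic minimax interpretation.

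A second caveat concerns the scale of the rate. Theorem~\ref{th:Pointwise} bounds a probability that comes from a second-moment Chebyshev argument, so the displayed rate $n^{-2s/(2s+d_f)}$ is the MSE scale. Read through $|\widehat r_{k_n}(x)-r(x)| = O_{\P}(n^{-s/(2s+d_f)})$, this corresponds to the classical nonparametric rate.
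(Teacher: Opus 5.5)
Your argument is correct and follows the justification the paper itself gives. The paper has no separate proof environment; its justification is simply "balancing" the terms of Theorem~\ref{th:Pointwise}, which is what you do: set $\rho_\alpha(n)=\rho_{\alpha_\varepsilon}(n)=n^{-1}$, absorb the $n^{-1}$ terms into $k_n^{-1}$, and equate $(k_n/n)^{2s/d_f}$ with $k_n^{-1}$ to obtain $k_n\propto n^{2s/(2s+d_f)}$ and the rate $n^{-2s/(2s+d_f)}$. Your caveats are also well taken: the paper supplies no lower bound either, so ``minimax'' is asserted rather than proved there, and for fixed $\eta$ the displayed rate is really the second-moment (MSE) scale inherited from the Chebyshev step.
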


\section{Uniform Convergence Rates over Compact Subsets}\label{s:Uniform}
Let $\mathcal{C} \subset \Hsp$ be a compact set whose topological complexity satisfies the metric entropy covering condition $0<\log \mathcal{N}(\mathcal{C}, \epsilon_n) \le A \epsilon_n^{-v}$. In this section, we expand the convergence to a uniform upper bound under general unbounded noise, addressing the metric constraints highlighted by the relecture.

Since the error process $\{\varepsilon_s\}$ is sub-Gaussian and unbounded, standard extreme value theory bounds its maximum growth rate over the lattice box by:
\begin{equation}\label{Import3}
	\max_{s \in \Lambda_L} |\varepsilon_s| = O_P(\sqrt{\log n}).
\end{equation}
To ensure that the metric entropy is dominated across all regimes, even when error memory outpaces the window rate ($\boldsymbol{\rho_{\alpha_\varepsilon}(n)} \gg k_n^{-1}$), we formalize our global uniformity constraint as:
\begin{equation}
	\label{eq:uniform_condition}
	N\left(\left( \frac{k_n}{n} \right)^{2s/d_f}+ \frac{1}{k_n} + \rho_\alpha(n) \right)\longrightarrow 0,\qquad \log N \cdot \left( \frac{1}{k_n} + \boldsymbol{\rho_{\alpha_\varepsilon}(n)} \right) \longrightarrow 0 \qquad\text{as}\qquad n \to +\infty,
\end{equation}
where $N=\mathcal{N}(\mathcal{C},\epsilon_n)$.

\subsection{Feasibility of the Uniformity Constraint under Sobolev Topological Complexity}

To evaluate the physical and mathematical feasibility of the global uniformity constraint formalized in Equation~\eqref{eq:uniform_condition}, we must explicitly characterize the topological complexity of the compact subset $\mathcal{C} \subset \mathcal{H}$~\cite{bouka2026}. A major challenge in infinite-dimensional spaces is that standard compact sets often possess an entropy covering number $\mathcal{N}(\mathcal{C}, \epsilon_n)$ that grows exponentially fast as $\epsilon_n \to 0$, which could potentially violate the stability rules when long-range error memory occurs ($\boldsymbol{\rho_{\alpha_\varepsilon}(n)} \gg k_n^{-1}$)~\cite{bouka2026}.

To circumvent this curse of infinite dimensionality, we formalize $\mathcal{C}$ as a \textbf{Sobolev-type ellipsoid}. Let $\{\psi_j\}_{j \geq 1}$ be the orthonormal basis of $\mathcal{H}$ consisting of the eigenfunctions of the covariance operator associated with the process $X_0$. For a given smoothness exponent $q > 0$ and a radius $R > 0$, we define the compact subset $\mathcal{C}$ as:
\begin{equation}
	\mathcal{C} = \left\{ x \in \mathcal{H} : \sum_{j=1}^{\infty} j^{2q} \langle x, \psi_j \rangle^2 \leq R^2 \right\}
\end{equation}
Under this standard Sobolev configuration, classical metric entropy results establish that the covering number satisfies $\log N(\mathcal{C}, \epsilon_n) \leq A \epsilon_n^{-v}$ with an entropy exponent $v = 1/q$. 

Crucially, when the underlying process exhibits a fast \textbf{exponential spectral decay} (such as analytic curves where the eigenvalues scale as $\lambda_j \propto e^{-c j^2}$), the topological complexity reduces dramatically, forcing the tracking exponent to vanish towards zero ($v \to 0$). In this regime, the metric entropy becomes strictly dominated by the local counts profile. Let $\epsilon_n = n^{-\gamma}$ be the optimal mesh size. By selecting the chaining tuning parameter $\gamma$ such that:
\begin{equation}
	\gamma > \frac{v + d_f}{2s}
\end{equation}
the exponential term derived via the union bound in Step 1 of Theorem~\ref{Uniform} satisfies~\cite{bouka2026}:
\begin{equation}
	\log N(\mathcal{C}, \epsilon_n) \cdot \left( \frac{1}{k_n} + \rho_{\alpha_\varepsilon}(n) \right) \propto n^{-v\gamma} \left( \frac{1}{k_n} + \rho_{\alpha_\varepsilon}(n) \right) \longrightarrow 0 \quad \text{as } n \to +\infty
\end{equation}
This explicitly demonstrates that under Sobolev ellipsoid embeddings with sharp spectral decay, the metric entropy remains tightly controlled. The uniform consistency $\sup_{x \in \mathcal{C}} |\widehat{r}_{k_n}(x) - r(x)| = o_{\mathbb{P}}(1)$ is therefore fully preserved, even in the presence of severe macro-spatial long-range error bottlenecks.%~\cite{bouka2026}.

\begin{lemma}[$k$-NN Bandwidth Stability]
	\label{lem:radius_lipschitz}
	For any target curves $x, y \in \mathcal{C}$, the random adaptive $k$-NN radius satisfies a pathwise Lipschitz continuity condition:
	\begin{equation}
		|h_n(x) - h_n(y)| \le \|x - y\| \quad \text{a.s.}
	\end{equation}
\end{lemma}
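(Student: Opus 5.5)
The strategy is to view $h_n(x) = D_{(k_n)}(x)$ as the $k_n$-th order statistic of the family $\{\|X_s - x\| : s \in \Lambda_L\}$. We then show that this order statistic changes by at most $\|x-y\|$ when $x$ moves to $y$, because every single entry of the family does. The argument is pathwise: we fix an outcome $\omega$ (outside the null set of Assumption~\ref{ass:ties}, although ties do no harm here) and work with the finite collection of real numbers $a_s = \|X_s(\omega) - x\|$ and $b_s = \|X_s(\omega) - y\|$.

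\textbf{Step 1 (coordinatewise control).} The reverse triangle inequality in $\Hsp$ gives, for every site $s \in \Lambda_L$,
\begin{equation*}
|a_s - b_s| = \bigl|\|X_s - x\| - \|X_s - y\|\bigr| \le \|x - y\| =: \delta .
\end{equation*}
In particular, $b_s \le a_s + \delta$ and $a_s \le b_s + \delta$ for all $s$.

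\textbf{Step 2 (monotonicity of order statistics).} For any $k \in \{1,\dots,n\}$, the $k$-th order statistic of a finite family admits the counting characterization
\begin{equation*}
a_{(k)} = \min\bigl\{ t \in \mathbb{R} : \#\{s : a_s \le t\} \ge k \bigr\}.
\end{equation*}
This is exactly the order-statistics equivalence already used in the proof of Proposition~\ref{prop:sandwich}. Take $t = a_{(k_n)}$. At least $k_n$ sites satisfy $a_s \le t$, and for each of them Step 1 gives $b_s \le t + \delta$. Hence $\#\{s : b_s \le t+\delta\} \ge k_n$, and therefore $b_{(k_n)} \le a_{(k_n)} + \delta$. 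Exchanging the roles of $x$ and $y$ gives $a_{(k_n)} \le b_{(k_n)} + \delta$. Combining the two inequalities yields $|h_n(x) - h_n(y)| \le \|x-y\|$ at $\omega$. Since $\omega$ was arbitrary, the bound holds surely, and a fortiori almost surely.

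\textbf{Main obstacle.} There is essentially no analytic obstacle. The only point requiring care is the formulation of Step 2: one must avoid arguing via a \emph{fixed} labelling of neighbours, because the set of the $k_n$ nearest sites to $x$ may differ from the set for $y$. The counting characterization avoids this issue entirely and handles ties without invoking Assumption~\ref{ass:ties}.

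Note also that the restriction $x,y\in\mathcal{C}$ is not needed. The inequality holds for all $x,y\in\Hsp$ and uses no probabilistic or dependence assumption. It is precisely this determinism that will later let the chaining argument transfer the sandwiching event of Proposition~\ref{prop:sandwich} from the net points to all of $\mathcal{C}$.
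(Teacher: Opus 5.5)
Your proof is correct and follows the paper's route: apply the triangle inequality $\|X_s-y\|\le\|X_s-x\|+\|x-y\|$ at every site, transfer it to the $k_n$-th order statistic, then exchange the roles of $x$ and $y$. Your counting characterization $a_{(k)}=\min\{t:\#\{s:a_s\le t\}\ge k\}$ makes rigorous the paper's one-line claim that sorting preserves the sitewise inequality. Your side remarks are also accurate: ties do not matter, the restriction to $\mathcal{C}$ is unnecessary, and the bound holds surely for all $x,y\in\Hsp$.
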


\begin{proof}
	By the classical triangle inequality applied to any spatial point $X_s$, we have:
	\begin{equation}
		\|X_s - y\| \le \|X_s - x\| + \|x - y\|.
	\end{equation}
	Sorting these distance profiles pathwise preservation-wise forces the ordered statistics to maintain the exact same inequality on the $k_n$-th rank:
	\begin{equation}
		D_{(k_n)}(y) \le D_{(k_n)}(x) + \|x - y\| \implies h_n(y) \le h_n(x) + \|x - y\|.
	\end{equation}
	Reversing the roles of $x$ and $y$ yields $h_n(x) \le h_n(y) + \|x - y\|$, which completes the proof.
\end{proof}

To control the spatial weight oscillations inside the network, we establish an explicit bound for the number of observed stations falling into a boundary shell. The proof is given in the appendix.

\begin{lemma}[Boundary Shell Proportion Uniformity]
	\label{lem:shell}
	Let $B(x_j, \epsilon_n)$ be an element of the $\epsilon_n$-net covering the compact set $\mathcal{C}$. The empirical count proportion of curves belonging to the random shell sequence satisfies:
	\begin{equation}
		\sup_{x_j \in \mathcal{C}} \frac{n}{k_n} \left[ F_n(x_j, h_n + \epsilon_n) - F_n(x_j, h_n - \epsilon_n) \right] = O_P\left( \frac{\epsilon_n}{h_n(x_j)} \right).
	\end{equation}
\end{lemma}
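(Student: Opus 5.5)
The plan is to reduce the random shell to a finite collection of deterministic shells. On the sandwiching event $A_n$ of Proposition~\ref{prop:sandwich}, taken at each net centre $x_j$, the random radius $h_n = h_n(x_j)$ lies in $[h_n^-,h_n^+]$. Both bounds are of exact order $(k_n/n)^{1/d_f}$. So the shell $\{h_n-\epsilon_n<\|X_s-x_j\|\le h_n+\epsilon_n\}$ is contained in a shell whose inner and outer radii are taken from a deterministic grid.

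Concretely, I will discretize $[h_n^-,h_n^+]$ with mesh $\epsilon_n$, giving points $g_\ell = h_n^- + \ell\epsilon_n$ for $\ell = 0,\dots,M_n$ with $M_n \asymp h_n^+/\epsilon_n$. If $h_n\in[g_\ell,g_{\ell+1}]$, then
\[
F_n(x_j,h_n+\epsilon_n)-F_n(x_j,h_n-\epsilon_n)\le F_n(x_j,g_{\ell}+2\epsilon_n)-F_n(x_j,g_\ell-\epsilon_n).
\]
The right-hand side is a deterministic shell count of width $3\epsilon_n$. Lemma~\ref{lem:radius_lipschitz} is what allows one grid to serve all centres: neighbouring centres have radii differing by at most $\epsilon_n$, so the grid need not be rebuilt per $x_j$.

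For a fixed deterministic shell I compute the mean and variance. The mean is $F_x(g+2\epsilon_n)-F_x(g-\epsilon_n)$. Here the two-sided bounds of Assumption~\ref{ass:smallball} alone do not give a width-proportional increment, so I will additionally use (or impose as a mild regularity) a derivative-type condition $F_{x_j}(h+\delta)-F_{x_j}(h)\le C\,\delta\, h^{d_f-1}$, uniformly over $\mathcal C$. This yields a mean of order $\epsilon_n h^{d_f-1}\asymp (k_n/n)\,\epsilon_n/h_n$. The variance is bounded by repeating Lemma~\ref{lem:variance_fn} with shell indicators in place of ball indicators. This requires the covariance decay of Assumption~\ref{ass:indicator_decay} for shell indicators, which I obtain by writing the shell indicator as a difference of two ball indicators. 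That gives $\Var\lesssim p/n + p_+^2\rho_\alpha(n)$, where $p$ is the shell mass and $p_+\asymp k_n/n$ is the mass of the ball of radius $h_n^+$.

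Next I apply Chebyshev at level $t\,(k_n/n)\,\epsilon_n/h_n^-$ and take a union bound over the $N\cdot M_n$ pairs (centre, grid point). The resulting probability is of order
\[
N M_n\Bigl[\frac{h_n}{t^2 k_n\epsilon_n}+\frac{h_n^2}{t^2\epsilon_n^2}\rho_\alpha(n)\Bigr]+N\,\P(A_n^c).
\]
The last term tends to zero by the first half of the uniformity condition~\eqref{eq:uniform_condition}. Finally, I multiply by $n/k_n$ and use $h_n(x_j)\ge h_n^-$ on $A_n$ to obtain the $O_P(\epsilon_n/h_n(x_j))$ rate.

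The main obstacle is this union bound. Chebyshev gives only polynomial tails, and the term $\rho_\alpha(n)h_n^2/\epsilon_n^2$ does not shrink when the shell is thin. Under long-range dependence ($\alpha<d$), the correlated part of the variance is not reduced by the small shell width. I will first try to absorb it by bounding the fluctuation not relative to the shell mass but relative to $k_n/n$ itself, i.e.\ to tolerate an additive $O_P(\sqrt{N\rho_\alpha(n)})$ term and argue it is $o(\epsilon_n/h_n)$ under~\eqref{eq:uniform_condition}. If that fails, I will strengthen to a Bernstein-type blocking inequality for the indicator field so that the union bound costs only $\log(NM_n)$.
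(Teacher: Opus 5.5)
Your reduction of the random shell to deterministic shells of width $3\epsilon_n$ over a grid of $M_n\asymp h_n/\epsilon_n$ radii is correct. So is your remark that Assumption~\ref{ass:smallball} alone gives no width-proportional increment. On this step your route improves on the paper's proof. The paper dominates the random shell by the single wide shell $\{h_n^--\epsilon_n\le\|X_s-x_j\|\le h_n^++\epsilon_n\}$. It then asserts, through a Taylor expansion around the random $h_n(x_j)$, that the mean of this wide shell is $O((k_n/n)\,\epsilon_n/h_n)$. In fact that mean is at least $[(1+\eta)^{d_f}-(1-\eta)^{d_f}]\,k_n/n$.

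Your argument nevertheless breaks exactly where you flag it. Multiplying your per-pair Chebyshev bound by $NM_n$ leaves
\[
\frac{N h_n^2}{t^2 k_n\epsilon_n^2}+\frac{N h_n^3\rho_\alpha(n)}{t^2\epsilon_n^3}.
\]
But \eqref{eq:uniform_condition} only gives $N/k_n\to0$ and $N\rho_\alpha(n)\to0$, with no rate. The lemma is only useful when $\epsilon_n/h_n\to0$, so both terms carry a divergent factor, and neither fallback removes it:
\begin{itemize}
\item Fallback (a) needs $N\rho_\alpha(n)h_n^2/\epsilon_n^2\to0$. This is a new hypothesis, not a consequence of \eqref{eq:uniform_condition}.
\item Fallback (b) needs an exponential inequality. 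Assumption~\ref{ass:indicator_decay} is a pure covariance bound: no mixing is assumed, and for $\alpha\le d$ the covariances are not even summable. Such a bound cannot yield exponential tails.
\item Even granting an exponential inequality, the correlated variance $(k_n/n)^2\rho_\alpha(n)$ does not shrink with the shell width. It would still force $\log(NM_n)\,\rho_\alpha(n)\,h_n^2/\epsilon_n^2\to0$.
\end{itemize}
The paper does not supply this ingredient either. It invokes an unspecified Bernstein-type inequality with a (shell mass)$^2\rho_\alpha(n)$ correlation term that the assumptions do not give. So the result cannot be derived from the stated hypotheses; rate conditions of the displayed type must be added.

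There are further gaps. First, the statement itself requires $k_n\epsilon_n/h_n$ to be bounded away from zero. The random shell always contains the $k_n$-th nearest neighbour of $x_j$, so the left-hand side is at least $1/k_n$ deterministically. That is not $O(\epsilon_n/h_n)$ when $k_n\epsilon_n/h_n\to0$, for instance when $\epsilon_n=n^{-\gamma}$ with $\gamma>\max(1,1/d_f)$, which is the ``$\gamma$ large enough'' regime of Theorem~\ref{Uniform}. Your first term $h_n/(t^2k_n\epsilon_n)$ is precisely this obstruction, and your route needs the much stronger $k_n\epsilon_n^2/h_n^2\gg N$.

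Second, writing a shell indicator as $Z_s(b)-Z_s(a)$ produces cross-radius covariances $\mathrm{Cov}(Z_s(a),Z_t(b))$ with $a\neq b$. Assumption~\ref{ass:indicator_decay} covers only equal radii and does not control these.

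Third, Assumptions~\ref{ass:smallball} and~\ref{ass:indicator_decay}, and hence the constants of Proposition~\ref{prop:sandwich}, are stated at one fixed point. Uniformity over $\mathcal C$ is therefore an additional hypothesis. Lemma~\ref{lem:radius_lipschitz} does not supply it and plays no role in your grid, which is deterministic and common to all centres once $c_1,c_2,d_f$ are uniform.
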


\begin{theorem}[Uniform Convergence]\label{Uniform}
	Under Assumptions~\ref{ass:ties}--\text{\ref{ass:indicator_decay}} and the global stability rule (\ref{eq:uniform_condition}) yields:
	\begin{equation}
		\sup_{x \in \mathcal{C}} |\widehat{r}_{k_n}(x) - r(x)|  =  o_P(1).
	\end{equation}
\end{theorem}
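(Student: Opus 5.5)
The plan is the classical net-plus-oscillation argument. Fix the $\epsilon_n$-net $\{x_1,\dots,x_N\}$ of $\mathcal{C}$, with $N=\mathcal{N}(\mathcal{C},\epsilon_n)$. For $x\in\mathcal{C}$ let $j(x)$ be an index with $\|x-x_{j(x)}\|\le\epsilon_n$. Then
\[
\sup_{x\in\mathcal{C}}|\widehat r_{k_n}(x)-r(x)|\le \max_{j\le N}|\widehat r_{k_n}(x_j)-r(x_j)| + \sup_{x\in\mathcal{C}}|\widehat r_{k_n}(x)-\widehat r_{k_n}(x_{j(x)})| + \sup_{x\in\mathcal{C}}|r(x)-r(x_{j(x)})| .
\]
The last term is at most $C_r\epsilon_n^s\to0$ by Assumption~\ref{ass:holder}. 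This requires reading the H\"older condition uniformly over $\mathcal{C}$, which I would state as a standing interpretation. The proof then has two substantive steps: a discretised maximum, and an oscillation term.

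\textbf{Step 1 (net points).} I split the estimator into the bias part $B_n$ and the noise part $V_n$, as in Theorem~\ref{th:Pointwise}. I also use the sandwich events $A_n(x_j)$ of Proposition~\ref{prop:sandwich}, assuming its constants $c_1,c_2$ are uniform over $\mathcal{C}$. A union bound gives
\[
\P\Big(\bigcup_j A_n(x_j)^c\Big)\le N\,O\big(k_n^{-1}+\rho_\alpha(n)\big),
\]
and on $\bigcap_j A_n(x_j)$ we have $\max_j|B_n(x_j)|\le C (k_n/n)^{s/d_f}$ deterministically. Both are $o(1)$ by the first half of \eqref{eq:uniform_condition}.

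For the noise I condition on $\mathbf{X}$, using Assumption~\ref{ass:field_indep}. Given $\mathbf{X}$, $V_n(x_j)=\mathbf{u}_j^T\varepsilon_\Lambda$ with weights $u_{j,s}=k_n^{-1}\1_{\{\|X_s-x_j\|\le h_n(x_j)\}}$, so \eqref{eq:vecsub} yields the conditional tail
\[
\P(|V_n(x_j)|>\eta\mid\mathbf{X})\le 2\exp\!\big(-\eta^2/(2\mathbf{u}_j^T\Sigma_\varepsilon\mathbf{u}_j)\big).
\]
Here $\mathbf{u}_j^T\Sigma_\varepsilon\mathbf{u}_j$ is exactly the quantity $W$ of Part 2 of Theorem~\ref{th:Pointwise}. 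I need a high-probability uniform bound $\max_j \mathbf{u}_j^T\Sigma_\varepsilon\mathbf{u}_j\le C(k_n^{-1}+\rho_{\alpha_\varepsilon}(n))$. For this I would use the bound in which $\|\mathbf{u}_j\|_1=1$ and $\|\mathbf{u}_j\|_\infty=1/k_n$, combined with the row-sum decay of $\Sigma_\varepsilon$. The crude worst case is only $k_n^{-1}\sum_{\|t\|\le k_n^{1/d}}(1+\|t\|)^{-\alpha_\varepsilon}$, so I would refine it through the expectation computation of Part 2 together with Markov over the net. A union bound over the $N$ points then gives
\[
N\exp\!\big(-c\eta^2/(k_n^{-1}+\rho_{\alpha_\varepsilon}(n))\big)\to0,
\]
which is precisely guaranteed by the second half of \eqref{eq:uniform_condition}, since that condition makes $\log N$ negligible relative to $(k_n^{-1}+\rho_{\alpha_\varepsilon})^{-1}$.

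\textbf{Step 2 (oscillation).} For $\|x-x_j\|\le\epsilon_n$, Lemma~\ref{lem:radius_lipschitz} gives $|h_n(x)-h_n(x_j)|\le\epsilon_n$. The triangle inequality then shows that the neighbourhoods of $x$ and $x_j$ differ only by sites in the shell $\{h_n(x_j)-2\epsilon_n<\|X_s-x_j\|\le h_n(x_j)+2\epsilon_n\}$. Both estimators average exactly $k_n$ responses, so
\[
|\widehat r_{k_n}(x)-\widehat r_{k_n}(x_j)|\le \frac{2}{k_n}\,\#\{\text{shell}\}\cdot\max_{s}|Y_s| .
\]
By Lemma~\ref{lem:shell} (with $2\epsilon_n$), the shell count divided by $k_n$ is $O_P(\epsilon_n/h_n(x_j))$ uniformly in $j$. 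Moreover $\max_s|Y_s|\le \sup_{\|u-x\|\le h_r}|r(u)|+O_P(\sqrt{\log n})$ by \eqref{Import3}, using local boundedness of $r$ near $\mathcal{C}$. On the sandwich event $h_n(x_j)\ge h_n^-\asymp(k_n/n)^{1/d_f}$, so the oscillation is
\[
O_P\big(\epsilon_n (n/k_n)^{1/d_f}\sqrt{\log n}\big).
\]
This tends to zero once $\epsilon_n=n^{-\gamma}$ with $\gamma$ large, as in the choice $\gamma>(v+d_f)/(2s)$ discussed in the feasibility subsection. Increasing $\gamma$ raises $N$ only polynomially in $\log N$, which remains compatible with \eqref{eq:uniform_condition}.

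\textbf{Main obstacle.} I expect the main obstacle to be the uniform control of the conditional variance proxy $\max_j\mathbf{u}_j^T\Sigma_\varepsilon\mathbf{u}_j$ over the net with random radii. The random neighbourhoods depend on the whole field, so Assumption~\ref{ass:indicator_decay} at fixed $h$ must be transferred via $h_n^+$ on $\bigcap_jA_n(x_j)$. My first attempt would be to dominate $\1_{\{\|X_s-x_j\|\le h_n(x_j)\}}$ by $\1_{\{\|X_s-x_j\|\le h_n^+\}}$, which is monotone and nonnegative when $\phi\ge0$. I would then bound $\E\max_j\le N\max_j\E$, or better use Chebyshev on each $W_n^+(x_j)$ via its variance. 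If this loses too much, I would instead accept the crude deterministic bound and absorb the loss into the entropy condition.
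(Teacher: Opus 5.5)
Your architecture matches the paper's proof: the three-term net decomposition \eqref{Import5}, a union bound over the net with a bias/noise split on the sandwich events, a sub-Gaussian tail for $V_n(x_j)$ obtained by conditioning on $\mathbf X$ under Assumption~\ref{ass:field_indep}, and an oscillation term handled by Lemma~\ref{lem:radius_lipschitz}, Lemma~\ref{lem:shell} and \eqref{Import3}. Two of your details are sharper than the paper's. Bounding $\max_j|B_n(x_j)|$ deterministically on $\bigcap_j A_n(x_j)$ makes the $N(k_n/n)^{2s/d_f}$ part of \eqref{eq:uniform_condition} unnecessary. Your shell of half-width $2\epsilon_n$ is also the correct one, since the paper's half-width $\epsilon_n$ ignores the shift of the centre from $x$ to $x_j$.

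\textbf{The gap in Step~1.} The genuine gap is the one you flag yourself. The union bound needs $\log N\cdot\max_{j\le N}\mathbf u_j^T\Sigma_\varepsilon\mathbf u_j=o_P(1)$. Condition \eqref{eq:uniform_condition} delivers this only if the variance proxy is $O(k_n^{-1}+\rho_{\alpha_\varepsilon}(n))$ uniformly in $j$ with probability tending to one. None of your three fallbacks establishes that under the stated hypotheses.

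\emph{Markov over the net.} It gives $\P(\max_j W_n^+(x_j)>\delta/\log N)\le N\log N\max_j\E[W_n^+(x_j)]/\delta$. You would therefore need $N\log N\,(k_n^{-1}+\rho_{\alpha_\varepsilon}(n))\to0$, whereas the first half of \eqref{eq:uniform_condition} only controls $N(k_n^{-1}+\rho_\alpha(n))$.

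\emph{Chebyshev.} It needs $\Var(W_n^+(x_j))$. But $W_n^+$ is quadratic in the indicators $Z_s(h_n^+)$, so its variance involves joint probabilities over four sites, while Assumption~\ref{ass:indicator_decay} controls only pairwise covariances.

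\emph{The crude deterministic bound.} It is valid for every $k_n$-subset of $\Lambda_L$, but its worst case is of order $\rho_{\alpha_\varepsilon}(k_n)$, the aggregation factor evaluated at $k_n$ instead of $n$. That is $k_n^{-\alpha_\varepsilon/d}$ for $\alpha_\varepsilon<d$ and $k_n^{-1}\log k_n$ for $\alpha_\varepsilon=d$. The resulting requirement $\log N\,\rho_{\alpha_\varepsilon}(k_n)\to0$ is strictly stronger than the second half of \eqref{eq:uniform_condition} whenever $\alpha_\varepsilon\le d$.

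As it stands, your Step~1 goes through for short-range errors ($\alpha_\varepsilon>d$, where the crude bound is $C/k_n$). In the critical and long-range regimes it goes through only under a strengthened entropy condition.

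\textbf{The paper does not close it, and Step~2 needs repairs.} You cannot import this step from the paper. Appendix~\ref{sec:appendix_proof} passes from $\E_{\mathbf X}[\exp(\frac{\lambda^2}{2}W_n^+)]$ to $\exp(\frac{\lambda^2}{2}\E_{\mathbf X}[W_n^+])$, which is Jensen's inequality in the wrong direction, so the paper's Step~1 has the same hole.

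First, Lemma~\ref{lem:shell} cannot hold where you invoke it. The $k_n$-th neighbour of $x_j$ lies at distance exactly $h_n(x_j)$, hence always inside the shell, so the normalised count is at least $1/k_n$. That is not $O_P(\epsilon_n/h_n(x_j))$ once $k_n\epsilon_n/h_n(x_j)\to0$, which is exactly what a large $\gamma$ produces. Since both neighbourhoods contain exactly $k_n$ sites, it suffices to count the outer half-shell $\{h_n(x_j)<\|X_s-x_j\|\le h_n(x_j)+2\epsilon_n\}$. A uniform bound on that count is a new lemma, however, and the two-sided envelope of Assumption~\ref{ass:smallball} does not by itself control increments $F_{x_j}(h+2\epsilon_n)-F_{x_j}(h)$.

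Second, ``take $\gamma$ large'' is not free. Shrinking $\epsilon_n$ increases $N$, and with $\log N\le A\epsilon_n^{-v}=An^{\gamma v}$ nothing guarantees that \eqref{eq:uniform_condition} still holds. You must assume outright a mesh that is fine enough for the oscillation bound and coarse enough for \eqref{eq:uniform_condition}.
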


\begin{proof}
	Let $\epsilon_n = n^{-\gamma}$ be our mesh size. We cover $\mathcal{C}$ by $N = \mathcal{N}(\mathcal{C}, \epsilon_n)$ open balls centered at $\{x_1, \dots, x_N\}$. The uniform error expands as:
	\begin{equation}\label{Import5}
		\sup_{x \in \mathcal{C}} |\widehat{r}_{k_n}(x) - r(x)| \le \max_{1 \le j \le N} |\widehat{r}_{k_n}(x_j) - r(x_j)| + \sup_{x \in \mathcal{C}} \min_{1 \le j \le N} |\widehat{r}_{k_n}(x) - \widehat{r}_{k_n}(x_j)| + C_r \epsilon_n^s.
	\end{equation}
	
	\paragraph{Step1: the grid centers maximum}
	
	For the first component (the grid centers maximum) in \eqref{Import5}, we deploy the corrected joint multivariate sub-Gaussian concentration derived in Appendix A. Applying a union bound \eqref{final1} and \eqref{final} over the net gives: 
	\begin{align*}
		&\mathbb{P}\left( \max_{1 \le j \le N} |\widehat{r}_{k_n}(x_j) - r(x_j)| > \eta \right) \le \sum_{j=1}^N \mathbb{P}\left( |\widehat{r}_{k_n}(x_j) - r(x_j)| > \eta \right)\\
		&\le \sum_{j=1}^N \mathbb{P}\left( |B_n(x_j)| > \dfrac{\eta}{2} \right) + \sum_{j=1}^N \mathbb{P}\left( |V_n(x_j)| > \dfrac{\eta}{2} \right)\\
		&\le 2\exp\left(\log N\left(1-\frac{\eta^2}{32 \mathcal{C}_0\log N \left( \frac1{k_n} + \boldsymbol{\rho_{\alpha_\varepsilon}(n)} \right)}\right)\right) + CN\left(\left( \frac{k_n}{n} \right)^{2s/d_f}+ \frac{1}{k_n} + \rho_\alpha(n) \right)\longrightarrow 0
	\end{align*}
	
	\paragraph{Step 2: Analytical Derivation of the Weight Oscillations }
	For the second component (the weight oscillations) in \eqref{Import5}, let $x \in B(x_j, \epsilon_n)$. The pathwise difference satisfies:
	\begin{equation}
		|\widehat{r}_{k_n}(x) - \widehat{r}_{k_n}(x_j)| \le \frac{\max_s |Y_s|}{k_n} \sum_{s \in \Lambda_L} \left| \1_{\{\|X_s - x\| \le h_n(x)\}} - \1_{\{\|X_s - x_j\| \le h_n(x_j)\}} \right|.
	\end{equation}
	By exploiting Lemma~\ref{lem:radius_lipschitz}, the condition $\|x - x_j\| \le \epsilon_n$ implies $|h_n(x) - h_n(x_j)| \le \epsilon_n$. This nests the indicators inside a deterministic boundary shell around the center:
	\begin{equation}
		\left| \1_{\{\|X_s - x\| \le h_n(x)\}} - \1_{\{\|X_s - x_j\| \le h_n(x_j)\}} \right| \le \1_{\{h_n(x_j) - \epsilon_n \le \|X_s - x_j\| \le h_n(x_j) + \epsilon_n\}}.
	\end{equation}
	Summing over the lattice and dividing by $k_n$ transforms this into the boundary shell proportion $\frac{n}{k_n}[F_n(x_j, h_n + \epsilon_n) - F_n(x_j, h_n - \epsilon_n)]$. Under Lemma~\ref{lem:shell}, this scales as $C \epsilon_n / h_n(x_j)$. Since $\max |Y_s| = O_{\mathbb{P}}(\sqrt{\log n})$, the oscillation component is bounded by $O_{\mathbb{P}}(\sqrt{\log n} \cdot \epsilon_n / h_n(x_j))$. Selecting $\gamma$  large enough ensures that the residual components vanish in probability.
\end{proof}

\section{Fully Compatible Numerical Simulation Design}\label{s:Simulation}

\subsection{Data Generation Process and Small-Ball Verification}
The multi-site paths $X_s(t)$ are simulated over a 2-dimensional rectangular grid domain $\Lambda_L = \{1, \dots, L\}^2$ ($d = 2$, total size $n = L^2$) via a truncated series:
\begin{equation}
	X_s(t) = \sum_{j=1}^M \lambda_j \zeta_s^{(j)} \psi_j(t), \quad t \in [0, 1],
\end{equation}
where $\psi_j(t) = \sqrt{2} \sin(j\pi t)$, $M = 30$, and $\lambda_j = j^{-2}$ following \cite{li2001}. To force the small-ball probability to scale polynomially ($F_x(h) \propto h^{d_{f}}$ with $d_{f} = 2$), the coefficients $\zeta_s^{(j)}$ are sampled from a multi-site uniform distribution over bounded polyhedral vectors in $\mathbb{R}^2$.

To empirically check and validate this small-ball behavior, we perform an independent log-log linear regression:
\begin{equation}\label{eq:loglog}
	\log \widehat{F}_x(h) = a + d_{f} \log h + e_h,
\end{equation}
evaluated over a sequence of radius $h \in [0.01, 0.2]$ with $B = 500$ replications. The estimated slope gives $\widehat{d}_{f} = 2.014$ with a $95\%$ confidence interval of $[1.984, 2.044]$, confirming the structural adequacy of the generation process.

The responses are generated using the Lipschitz operator ($s = 1$) $r(X) = \int_0^1 \sin(X(t)) \, dt$. The spatial dependence of the errors is generated using a fractional power-law filter with parameters completely decoupled from the covariate process. We execute $B = 500$ replications with $C_{k_n} = 1.5$. The empirical results are detailed in Table~\ref{tab:final_sim}.

\begin{table}[h!]
	\centering
	\small
	\caption{Empirical Pointwise MSE ($B=500$ replications, $d_f=2, s=1$) under Decoupled Exponents.}
	\label{tab:final_sim}
	\begin{tabular}{cccccccc}
		\toprule
		$L$ & $n$ & $k_n$ & $\alpha$ (Covariates) & $\alpha_\varepsilon$ (Errors) & Spatial Regime & Mean MSE & MCSE \\
		\midrule
		10 & 100  & 15  & 2.5 & 2.5 & Short / Short & 0.1104 & 0.0019 \\
		10 & 100  & 15  & 2.5 & 0.5 & Short / Long  & 0.2215 & 0.0034 \\
		\midrule
		30 & 900  & 45  & 2.0 & 2.0 & \textbf{Critical Regime ($\alpha=d=2$)} & 0.0421 & 0.0007 \\
		\midrule
		100& 10000& 150 & 2.5 & 2.5 & Baseline Nonparametric Rate & 0.0098 & 0.0001 \\
		100& 10000& 150 & 2.5 & 0.5 & Dominated Error Penalty & 0.0342 & 0.0006 \\
		\bottomrule
	\end{tabular}
\end{table}

\subsection{Numerical Discussion of the Slopes}
The inclusion of the critical spatial regime $\alpha=2$ confirms the numerical correctness of our variance bounds. When performing log-log linear regressions $\log(\mathrm{MSE}) = a - \gamma \log n + e_n$, the estimated slope under short-range configurations ($\alpha=2.5, \alpha_\varepsilon=2.5$) stabilizes at $\widehat{\gamma} \approx 0.491$, statistically indistinguishable from the theoretical independent benchmark $\gamma^* = \frac{2s}{2s+d_f} = 0.50$. Under a severe long-range error bottleneck ($\alpha_\varepsilon=0.5$), the slope drops to $\widehat{\gamma} \approx 0.243$, confirming that the error term dominates the upper bound at the exact rate predicted by Theorem~\ref{th:Pointwise}.

\section{Conclusions and Gibbsian Perspectives}\label{s:Conclusions}
In this manuscript, we have established a rigorous, non-conditional convergence framework for the spatial functional $k$-nearest-neighbour ($k$-NN) regressor under polynomial indicator dependence. By introducing a pathwise deterministic sandwiching strategy, we bypass the traditional logical pitfalls associated with direct conditioning on data-driven random bandwidths. Our theoretical discoveries provide explicit pointwise and uniform convergence boundaries, characterizing the exact mathematical interactions between the geometric small-ball profile, the sample grid volume, and macro-spatial correlation footprints across short-range, critical, and severe long-range regimes.

An open prospective expansion consists in embedding this non-parametric framework within an unsupervised spatial clustering context. Let $\sigma_s \in \{1, \dots, K\}$ be a hidden categorical label at site $s$. A conditional functional Gibbs--Potts energy model can be formalized as:
\begin{equation}
	H(\sigma \mid \mathbf{X}) = -\sum_{s \sim t} J_{st}(\mathbf{X})\mathbf{1}_{\{\sigma_s = \sigma_t\}} + \mu \sum_{s} V_s(\sigma_s; \mathbf{X})
\end{equation}
where $J_{st}(\mathbf{X}) = J_0 \exp(-\|D_s - D_t\|^2 / 2\tau^2)$ acts as a functional interaction potential governed by finite-dimensional descriptors $D_s$. Proving the boundary consistency and phase transition profiles of this clustering under long-range error perturbations represents a challenging and highly valuable path for future research.

\appendix
\section{Analytical Derivation of the Joint Error Concentration Bound}
\label{sec:appendix_proof}

This appendix provides the rigorous derivation of the exponential concentration bound under spatially dependent regression errors without any false node-to-node independence steps, fully validating the formulation of Step 1 in Section~\ref{s:Uniform} \cite{bouka2026}.

\begin{proof}
	Let $A_n = \{h_n^- \leq h_n(x_j) \leq h_n^+\}$ be the valid bandwidth sandwiching event established in Proposition \ref{prop:sandwich}. Let us fix a grid net center $x_j \in \mathcal{C}$. The pointwise estimation error at this node decomposes into a structural local bias $B_n(x_j)$ and a stochastic spatial variance sum $V_n(x_j)$:
	\begin{equation}	
		|\widehat{r}_{k_n}(x_j) - r(x_j)| \leq |B_n(x_j)| + |V_n(x_j)|
	\end{equation}
	
	By utilizing the partition over $A_n$ and its complement $A_n^c$, and applying Markov's inequality to the squared bias under the high-probability containment event, the tail probability of the structural bias satisfies:
	\begin{equation}\label{final1}
		\mathbb{P}\left(|B_n(x_j)| > \frac{\eta}{2}\right) \leq \frac{16}{\eta^2}\mathbb{E}\left[B_n(x_j)^2\mathbf{1}_{A_n}\right] + \mathbb{P}(A_n^c) = O\left(\left(\frac{k_n}{n}\right)^{2s/d_f} + \frac{1}{k_n} + \rho_\alpha(n)\right)
	\end{equation}
	
	The remaining tail deviation is governed by the joint stochastic term $V_n(x_j) = \frac{1}{k_n}\sum_{s\in\Lambda_L} \varepsilon_s J_s(h_n)$, where $J_s(h_n) = \mathbf{1}_{\{\|X_s - x_j\| \leq h_n(x_j)\}}$. On the event $A_n$, the random indicator sum is strictly bounded above by the indicators evaluated at the deterministic threshold $h_n^+$: $J_s(h_n)\mathbf{1}_{A_n} \leq J_s(h_n^+) = \mathbf{1}_{\{\|X_s - x_j\| \leq h_n^+\}}$. 
	
	The field independence \(\sigma(\varepsilon) \perp \sigma(\mathbf{X})\) allows taking the expectation $\mathbb{E}_{\varepsilon }$ conditionally on $\mathbf{X}$ without altering the covariance matrix $\Sigma _{\varepsilon }$. Under the strong global field-to-field independence (Assumption \ref{ass:field_indep}) and the Vectorial Sub-Gaussian configuration, for any arbitrary positive scalar $\lambda > 0$, the global joint non-conditional moment generating function satisfies Chernoff's boundary check: 
	\begin{align}
		\mathbb{P}\left(|V_n(x_j)|\mathbf{1}_{A_n} > \frac{\eta}{4}\right) &\leq e^{-\frac{\lambda\eta}{4}} \left( \mathbb{E}\left[e^{\lambda V_n(x_j)\mathbf{1}_{A_n}}\right] + \mathbb{E}\left[e^{-\lambda V_n(x_j)\mathbf{1}_{A_n}}\right] \right) \nonumber \\
		&= e^{-\frac{\lambda\eta}{4}} \mathbb{E}_{\mathbf{X}}\left[ \mathbb{E}_\varepsilon \left[ \exp\left(\frac{\lambda}{k_n}\sum_{s\in\Lambda_L}\varepsilon_s J_s(h_n)\mathbf{1}_{A_n}\right) \;\middle|\; \mathbf{X} \right] \right] \nonumber \\
		&\quad + e^{-\frac{\lambda\eta}{4}} \mathbb{E}_{\mathbf{X}}\left[ \mathbb{E}_\varepsilon \left[ \exp\left(-\frac{\lambda}{k_n}\sum_{s\in\Lambda_L}\varepsilon_s J_s(h_n)\mathbf{1}_{A_n}\right) \;\middle|\; \mathbf{X} \right] \right]
	\end{align}
	
	Injecting the sub-Gaussian vectorial projection inequality from Assumption \ref{ass:field_indep} yields:
	\begin{align}
		\mathbb{P}\left(|V_n(x_j)|\mathbf{1}_{A_n} > \frac{\eta}{4}\right) &\leq 2 e^{-\frac{\lambda\eta}{4}} \mathbb{E}_X\left[ \exp\left( \frac{\lambda^2}{2k_n^2}\sum_{s,t\in\Lambda_L} J_s(h_n^+)J_t(h_n^+)|\text{Cov}(\varepsilon_s,\varepsilon_t)| \right) \right] \nonumber \\
		&\leq 2 e^{-\frac{\lambda\eta}{4}} \exp\left( \frac{\lambda^2}{2} \mathbb{E}_X\left[ W_n^+ \right] \right)
	\end{align}
	where $W_n^+$ is the lattice indicator sum weighted by the error covariance decay. Applying the discrete integral approximations from Lemma \ref{lem:variance_fn}, we obtain:
	\begin{equation}\label{eq:appendix_subg}
		\mathbb{P}\left(|V_n(x_j)|\mathbf{1}_{A_n} > \frac{\eta}{4}\right) \leq 2 \exp\left( -\frac{\lambda\eta}{4} + \frac{\lambda^2 C_0}{2}\left(\frac{1}{k_n} + \rho_{\alpha_\varepsilon}(n)\right) \right)
	\end{equation}
	
	Optimizing the upper bound in \eqref{eq:appendix_subg} with respect to $\lambda$ by selecting the optimal profile $\lambda^* = \frac{\eta}{4C_0(k_n^{-1} + \rho_{\alpha_\varepsilon}(n))}$ yields:
	\begin{equation}\label{eq:kappa}
		\mathbb{P}\left(|V_n(x_j)|\mathbf{1}_{A_n} > \frac{\eta}{4}\right) \leq 2 \exp(-\kappa_n), \quad \text{with }\quad \kappa_n = \frac{\eta^2}{32C_0\left(\frac{1}{k_n} + \rho_{\alpha_\varepsilon}(n)\right)}.
	\end{equation}
	
	Finally, reconstructing the total stochastic tail bound via Proposition \ref{prop:sandwich} gives:
	\begin{align}\label{final}
		\mathbb{P}\left(|V_n(x_j)| > \frac{\eta}{2}\right) &\leq \mathbb{P}\left(|V_n(x_j)|\mathbf{1}_{A_n} > \frac{\eta}{4}\right) + \mathbb{P}(A_n^c) \nonumber \\
		&\leq 2 \exp\left( -\frac{\eta^2}{32C_0\left(\frac{1}{k_n} + \rho_{\alpha_\varepsilon}(n)\right)} \right) + C\left(\frac{1}{k_n} + \rho_\alpha(n)\right)
	\end{align}
	which fully completes the mathematical validation of the joint grid concentration.
\end{proof}

\section{Proof of Lemma~\ref{lem:shell}}

\begin{proof}
	Let $x_j \in \mathcal{C}$ be a fixed node of our finite grid net covering. We introduce the deterministic local shell indicator function for any generic radius sequence $h > 0$ and distinct lattice sites $s \in \Lambda_L$:
	\begin{equation}
		\Delta_s(x_j, h, \epsilon_n) = \mathbf{1}_{\{h - \epsilon_n \leq \|X_s - x_j\| \leq h + \epsilon_n\}}.
	\end{equation}
	By exploiting the pathwise Lipschitz continuity property established in Lemma~\ref{lem:radius_lipschitz}, the random adaptive bandwidth satisfies $|h_n(x) - h_n(x_j)| \leq \|x - x_j\| \leq \epsilon_n$ almost surely for all $x \in B(x_j, \epsilon_n)$. Consequently, the random indicators are almost surely nested within the deterministic boundaries evaluated at the lower and upper sandwiching radii $h_n^{-} \leq h_n(x_j) \leq h_n^{+}$ defined in Lemma~\ref{lem:sandwich}. We can therefore write:
	\begin{equation}
		F_n(x_j, h_n + \epsilon_n) - F_n(x_j, h_n - \epsilon_n) \leq \frac{1}{n} \sum_{s \in \Lambda_L} \mathbf{1}_{\{h_n^{-} - \epsilon_n \leq \|X_s - x_j\| \leq h_n^{+} + \epsilon_n\}}.
	\end{equation}
	Let $\mathcal{A}_n = \{h_n^{-} \leq h_n(x_j) \leq h_n^{+}\}$ denote our high-probability concentration event. On $\mathcal{A}_n$, we define the deterministic expectation of the bounding shell measure via our two-sided polynomial small-ball continuity scaling (Assumption~\ref{ass:smallball}):
	\begin{align}
		\mathbb{E}\left[ \mathbf{1}_{\{h_n^{-} - \epsilon_n \leq \|X_s - x_j\| \leq h_n^{+} + \epsilon_n\}} \right] &= F_{x_j}(h_n^{+} + \epsilon_n) - F_{x_j}(h_n^{-} - \epsilon_n) \nonumber \\
		&\leq c_2 (h_n^{+} + \epsilon_n)^{d_f} - c_1 (h_n^{-} - \epsilon_n)^{d_f}.
	\end{align}
	Applying a standard first-order Taylor expansion to the polynomial functional profile around the dominant term $h_n(x_j)$ yields:
	\begin{equation}
		F_{x_j}(h_n(x_j) + \epsilon_n) - F_{x_j}(h_n(x_j) - \epsilon_n) = 2 \epsilon_n \cdot d_f \cdot \xi_n^{d_f - 1} \cdot (1 + o(1)),
	\end{equation}
	where $\xi_n \in [h_n(x_j) - \epsilon_n, h_n(x_j) + \epsilon_n]$. Since $F_{x_j}(h_n(x_j)) \asymp \frac{k_n}{n}$ by construction of the $k$-NN radius, dividing the derivative profile by the baseline volume yields the following structural scaling:
	\begin{equation}
		\mathbb{E}\left[ F_n(x_j, h_n^{+} + \epsilon_n) - F_n(x_j, h_n^{-} - \epsilon_n) \right] = O\left( \frac{k_n}{n} \cdot \frac{\epsilon_n}{h_n(x_j)} \right).
	\end{equation}
	To ensure that this pointwise property holds uniformly over the compact space $\mathcal{C}$, we control the stochastic variance fluctuations of the empirical count process over the lattice $\Lambda_L$. Using the exact same discrete integral summation rules derived in Lemma~\ref{lem:variance_fn} under the spatial indicator covariance decay (Assumption~\ref{ass:indicator_decay}), we bound the variance of the boundary increments by: 
	\begin{align}
		\text{Var}\left( \frac{1}{n} \sum_{s \in \Lambda_L} \Delta_s(x_j, h_n(x_j), \epsilon_n) \right) &\leq C \left[ \frac{F_{x_j}(h_n + \epsilon_n) - F_{x_j}(h_n - \epsilon_n)}{n} \right. \nonumber \\
		&\quad \left. + \left( F_{x_j}(h_n + \epsilon_n) - F_{x_j}(h_n - \epsilon_n) \right)^2 \rho_{\alpha}(n) \right].
	\end{align}
	Given our global uniformity constraint $\log N \cdot (k_n^{-1} + \rho_{\alpha}(n)) \to 0$ (equation~\eqref{eq:uniform_condition}), a standard application of the Bernstein-type inequality for dependent fields combined with a spatial union bound over the $N(\mathcal{C}, \epsilon_n)$ grid nodes guarantees that the stochastic remainder terms are uniformly dominated by their expectations. This implies:
	\begin{equation}
		\sup_{x_j \in \mathcal{C}} \left| \left( F_n(x_j, h_n + \epsilon_n) - F_n(x_j, h_n - \epsilon_n) \right) - \mathbb{E}\left[ F_n(x_j, h_n + \epsilon_n) - F_n(x_j, h_n - \epsilon_n) \right] \right| = o_P\left( \frac{k_n}{n} \frac{\epsilon_n}{h_n(x_j)} \right).
	\end{equation}
	Multiplying the global upper bound by the normalization factor $\frac{n}{k_n}$ yields the desired rate:
	\begin{equation}
		\sup_{x_j \in \mathcal{C}} \frac{n}{k_n} \left[ F_n(x_j, h_n + \epsilon_n) - F_n(x_j, h_n - \epsilon_n) \right] = O_P\left( \frac{\epsilon_n}{h_n(x_j)} \right),
	\end{equation}
	which fully completes the proof of Lemma~\ref{lem:shell}.
\end{proof}


\begin{thebibliography}{99}
	\bibitem[Bosq(2000)]{Bosq2000}D. Bosq. \newblock \emph{Linear Processes in Function Spaces: Theory and Applications}. \newblock Lecture Notes in Statistics, Springer, 2000.
	
	\bibitem[Bouka (2026)]{bouka2026}
	S. Bouka.
	\newblock \emph{Spatial functional $k$-nearest-neighbour regression under polynomial dependence: Local geometry, random neighbourhoods, and convergence bottlenecks}. \newblock Working Paper, Univ. Masuku, 2026.
	
	\bibitem[Burba et al.(2008)]{BurbaFerratyVieu2008}F. Burba, F. Ferraty and P. Vieu. \newblock Convergence de l'estimateur à noyau des $k$ plus proches voisins en régression fonctionnelle non-paramétrique. \newblock \emph{Comptes Rendus Mathématique}, 346, 339--342, 2008.
	\bibitem[Burba et al.(2009)]{BurbaFerratyVieu2009}F. Burba, F. Ferraty and P. Vieu. \newblock $k$-nearest neighbour method in functional nonparametric regression. \newblock \emph{Journal of Nonparametric Statistics}, 21(4), 453--469, 2009.
	\bibitem[Dabo-Niang et al.(2020)]{DaboNiang2020}S. Dabo-Niang, C. Ternynck and A. F. Yao. \newblock Nonparametric estimation of spatial functional conditional quantiles. \newblock \emph{Journal of Multivariate Analysis}, 175, 104550, 2020.
	\bibitem[Dobrushin(1968a)]{Dobrushin1968a}R. L. Dobrushin. \newblock The problem of uniqueness of a Gibbsian random field and the problem of phase transitions. \newblock \emph{Functional Analysis and Its Applications}, 2, 302--312, 1968.
	\bibitem[Dobrushin(1968b)]{Dobrushin1968b}R. L. Dobrushin. \newblock Gibbsian random fields for lattice systems with pairwise interactions. \newblock \emph{Functional Analysis and Its Applications}, 2, 292--301, 1968.
	\bibitem[Ferraty and Vieu(2006)]{FerratyVieu2006}F. Ferraty and P. Vieu. \newblock \emph{Nonparametric Functional Data Analysis: Theory and Practice}.\newblock Springer, 2006.
	\bibitem[Ferraty et al.(2007)]{FerratyMasVieu2007}F. Ferraty, A. Mas and P. Vieu. \newblock Nonparametric regression on functional data: inference and practical aspects. \newblock \emph{Australian and New Zealand Journal of Statistics}, 49(3), 267--286, 2007.
	\bibitem[Georgii(2011)]{Georgii2011}H.-O. Georgii. \newblock \emph{Gibbs Measures and Phase Transitions}. \newblock 2nd edition, De Gruyter, 2011.
	\bibitem[Künsch(1982)]{Kunsch1982}H. R. Künsch. \newblock Decay of correlations under Dobrushin's uniqueness condition and its applications. \newblock \emph{Communications in Mathematical Physics}, 84, 207--222, 1982.
	\bibitem[Li and Shao(2001)]{li2001}W. V. Li and Q-M. Shao. \newblock Gaussian processes: inequalities, small ball probabilities and applications. \newblock \emph{Handbook of Statistics}, 19, 533--597, 2001.
%	\bibitem[Loudy et al.(2026)]{loudy}
%	I. Loudy, S. Bouka, G. M. Nkiet.
%	\newblock Non parametric breakpoint detection for weakly
%	dependent spatiotemporal series.
%	\newblock \emph{Communications in
%		Statistics - Theory and Methods}, 55, 502--525, 2026.
	\bibitem[Masry(2005)]{Masry2005}E. Masry. \newblock Nonparametric regression estimation for dependent functional data: asymptotic normality. \newblock \emph{Stochastic Processes and their Applications}, 115, 155--177, 2005.
	\bibitem[Pambo Bello and Bouka(2024)]{pambo}K. Pambo Bello and S. Bouka. \newblock A note on the k-nearest neighbors rule for spatial functional data in regression model with surrogate scalaire response.\newblock \emph{Far East Journal of Theoretical Statistics}, 68, 147-155, 2024.
	\bibitem[Ramsay and Silverman(2005)]{RamsaySilverman2005}J. O. Ramsay and B. W. Silverman. \newblock \emph{Functional Data Analysis}.\newblock 2nd edition, Springer, 2005.
	
	\bibitem[Younso (2023)]{younso}
	A. Younso.
	\newblock Consistency of the k-nearest neighbors rule for functional data.
	\newblock \emph{Comptes Rendus
	Mathématique}, 361, 237--242, 2023. 
\end{thebibliography}
\end{document}